\newcommand\blfootnote[1]{%
	\begingroup
	\renewcommand\thefootnote{}\footnote{#1}%
	\addtocounter{footnote}{-1}%
	\endgroup}
\theoremstyle{plain}
\newtheorem{theorem}{Theorem}[section]
\newtheorem{proposition}[theorem]{Proposition}
\newtheorem{lemma}[theorem]{Lemma}
\newtheorem{fact}{Fact}[subsection]
\theoremstyle{definition}
\newtheorem{definition}[theorem]{Definition}
\newtheorem{remark}[theorem]{Remark}
\newtheorem{question}[theorem]{Question}
\newtheorem{problems}[theorem]{Problems}
\newcommand{\NN}{\mathbb{N}}
\newcommand{\RR}{\mathbb{R}}
\newcommand{\CC}{\mathbb{C}}
\newcommand{\KK}{\mathbb{K}}
\newcommand{\TT}{\mathbb{T}}
\newcommand{\AP}{\mathcal{AP}}
\newcommand{\Ec}{\mathcal{E}}
\newcommand{\Nc}{\mathcal{N}}
\newcommand{\Uc}{\mathcal{U}}
\newcommand{\URec}{\operatorname{URec}}
\newcommand{\FRec}{\operatorname{FRec}}
\newcommand{\UFRec}{\operatorname{UFRec}}
\newcommand{\RRec}{\operatorname{RRec}}
\newcommand{\Rec}{\textup{Rec}}
\newcommand{\HC}{\textup{HC}}
\newcommand{\cl}{\overline}
\newcommand{\orb}{\mathcal{O}}
\newcommand{\lspan}{\operatorname{span}}
\newcommand{\res}{\arrowvert}
\newcommand{\eps}{\varepsilon}
\newcommand{\Bdsup}{\overline{\operatorname{Bd}}}
\newcommand{\dsup}{\overline{\operatorname{dens}}}
\newcommand{\dinf}{\underline{\operatorname{dens}}}
\newcommand{\sbf}{\boldsymbol}
\newcommand{\ep}[2]{\langle #1 , #2 \rangle}
\newcommand{\Ker}{\textup{Ker}}
\newcommand{\til}{\widetilde}
\begin{document}
\begin{center}
	\begin{LARGE}
		{\bf Two remarks on the set of recurrent vectors}
	\end{LARGE}
\end{center}

\begin{center}
	\begin{Large}
		Antoni L\'opez-Mart\'inez \& Quentin Menet\blfootnote{\textbf{2000 Mathematics Subject Classification}: 47A16, 37B20, 46B87.\\ \textbf{Key words and phrases}: Linear dynamics, hypercyclicity, recurrence, cyclicity, reiterative recurrence.\\ \textbf{Journal-ref}:  	Journal of Mathematical Analysis and Applications, Volume 541, Issue 1, 1 January 2025, 128686.\\ \textbf{DOI}: https://doi.org/10.1016/j.jmaa.2024.128686}
	\end{Large}
\end{center}

\begin{abstract}
	We solve in the negative two open problems, related to the linear and topological structure of the set of recurrent vectors, asked by Sophie Grivaux, Alfred Peris and the first author of this paper. Firstly, we show that there exist recurrent operators whose set of recurrent vectors is not dense lineable; and secondly, we construct operators which are reiteratively recurrent and cyclic, but whose set of reiteratively recurrent vectors is meager.
\end{abstract}

\section{Introduction}

Given a {\em continuous linear operator} $T:X\longrightarrow X$ on a {\em separable infinite-dimensional Banach space} $X$, a vector $x \in X$ is called {\em hypercyclic for $T$} if its orbit,
\[
\orb_T(x) := \{ T^nx \ ; \ n\geq 0 \},
\]
is a dense set in $X$; and $T$ is said to be a {\em hypercyclic operator} whenever it admits a hypercyclic vector. In {\em Linear Dynamics} this property has been investigated in many different directions, one of them being to study the structure of the set $\HC(T)$, that is, the set of hypercyclic vectors for $T$. For instance: it is well-known (due to Birkhoff) that the set $\HC(T)$ is always a {\em dense $G_{\delta}$-set} for any hypercyclic operator $T$ (see \cite[Theorem~2.19]{GrPe2011_book}); and we also know (due to Herrero and Bourdon) that such a set is always {\em dense lineable}, that is, every hypercyclic operator $T$ admits a dense vector subspace that consists (except for the zero-vector) entirely of hypercyclic vectors (see \cite[Theorem~2.55]{GrPe2011_book}).

Another property that has appeared in the last years in the context of Linear Dynamics, and the one in which we are interested here, is that of recurrence: a vector $x \in X$ is called {\em recurrent for $T$} if it belongs to the closure of its forward orbit, that is, if
\[
x \in \cl{\orb_T(Tx)} = \cl{ \{ T^nx \ ; \ n\geq 1\} }.
\]
This definition is equivalent, in our ``{\em Banach space setting}'', to any of the following two facts:
\begin{enumerate}[--]
	\item there exists an increasing sequence of positive integers $(n_k)_{k\in\NN}$ such that $T^{n_k}x \to x$ as $k \to \infty$;
	
	\item for every neighbourhood $U$ of $x$ there exists a positive integer $n \geq 1$ such that $T^nx \in U$.
\end{enumerate}
Moreover, an operator $T$ is said to be a {\em recurrent operator} if the set of recurrent vectors for $T$, which will be denoted by $\Rec(T)$, is a dense set in $X$. Recurrence has a very long history in the context of {\em non-linear dynamical systems} (see \cite{Furstenberg1981_book} and \cite{GotHed1955_book}), while the start of its study in Linear Dynamics can just be dated back to 2014 when the work of Costakis, Manoussos and Parissis \cite{CoMaPa2014} was published. This paper was later followed by the very recent works \cite{BoGrLoPe2022}, \cite{CarMur2022_IEOT,CarMur2022_MS,CarMur2022_arXiv} and \cite{GriLo2023,GriLoPe2022,Lopez2022} among others, and the ``{\em novelty}'' of this property in the linear setting justifies, somehow, the many open problems that there exist right now in {\em linear recurrence}.

About the structure of the set of recurrent vectors many things are known by now: when $T$ is recurrent then the set $\Rec(T)$ is always a {\em dense $G_{\delta}$-set} by \cite[Proposition~2.1]{CoMaPa2014} and a {\em lineable} set, i.e.\ every recurrent operator $T$ admits an infinite-dimensional vector subspace that consists entirely of recurrent vectors (see \cite[Section~5]{GriLoPe2022}). Also the {\em spaceability} of such a set has been studied and, even though hypercyclicity is a much stronger notion than recurrence, the following curious result was proved in \cite{Lopez2022}: {\em a weakly-mixing operator acting on a Banach space admits a closed infinite-dimensional subspace of recurrent vectors if and only if it admits a closed infinite-dimensional subspace that consists (except for the zero-vector) of hypercyclic vectors}. In this paper we will focus on the {\em dense lineability} property for the set $\Rec(T)$, but the reader interested in the notions of lineability and spaceability in a more general context can refer to the book \cite{ABerPeSe2016_book}.

Given an operator $T:X\longrightarrow X$ we say that the set of recurrent vectors $\Rec(T)$ is {\em dense lineable} if it contains a dense vector subspace. This notion has been studied by Grivaux, Peris and the first author of this paper, and it was established in \cite{GriLoPe2022} that a sufficient condition for $\Rec(T)$ to be dense lineable is that of quasi-rigidity: an operator $T$ is called {\em quasi-rigid} if the $N$-fold direct sum operator
\[
\underbrace{T\oplus\cdots\oplus T}_N : \underbrace{X\oplus\cdots\oplus X}_N \longrightarrow \underbrace{X\oplus\cdots\oplus X}_N \ ,
\]
acting as $(x_1,x_2,...,x_N) \longmapsto (Tx_1,Tx_2,...,Tx_N)$, is again a recurrent operator for every $N \in \NN$. It is also shown in \cite[Section~3]{GriLoPe2022} that there exist recurrent operators which are not quasi-rigid, but for the examples constructed there every vector is recurrent, so that they trivially have a dense lineable set of recurrent vectors. One can thus wonder if (just the notion of) recurrence is enough to imply the mentioned dense lineability, as it was asked in \cite[Section~6]{GriLoPe2022}, and as it is the case for hypercyclicity. This is the first open problem that we solve here in the negative (see Section~\ref{Sec_2:not.dense.lineable} below):\\[-5pt]

\begin{question}[\textbf{\cite{GriLoPe2022}}]\label{Q:dense.lineable}
	Let $T:X\longrightarrow X$ be a recurrent operator. Is the set $\Rec(T)$ dense lineable?
\end{question}

In order to state the second problem that we are about to solve, let us introduce a strengthened notion of recurrence called {\em reiterative recurrence}, which appeared in the context of Linear Dynamics for the first time in the recent 2022 paper \cite{BoGrLoPe2022}: a vector $x \in X$ is called {\em reiteratively recurrent for $T$} if the return set from $x$ to any neighbourhood $U$ of $x$, that is, the set
\[
\Nc_T(x,U) := \{ n \geq 1 \ ; \ T^nx \in U \},
\]
has positive upper Banach density, which means that
\begin{equation}\label{eq:Bdsup}
	\Bdsup(\Nc_T(x,U)) := \lim_{N\to\infty} \left( \max_{m\geq 0} \frac{\# \left( \Nc_T(x,U) \cap [m+1,m+N] \right)}{N} \right) > 0.
\end{equation}
We will denote by $\RRec(T)$ the {\em set of reiteratively recurrent vectors for $T$}, which is said to be a {\em reiteratively recurrent operator} whenever $\RRec(T)$ is a dense set. This notion presents a nice relation: {\em an operator is reiteratively recurrent and hypercyclic if and only if it is reiteratively hypercyclic}, which is a strong version of hypercyclicity introduced in \cite{BesMePePu2016} and deeply studied in \cite{BesMePePu2019} and \cite{BoGrLoPe2022}. Note also that
\begin{equation}\label{eq:RRec.Rec}
	\RRec(T) \subset \Rec(T)
\end{equation}
since a vector $x$ belongs to $\Rec(T)$ if and only if the return set $\Nc_T(x,U)$ is non-empty for every neighbourhood $U$ of $x$. Moreover, the usual formula to compute the {\em upper Banach density} for a set of positive integers $J \subset \NN$ is written with a superior limit, that is,
\[
\Bdsup(J) := \limsup_{N\to\infty} \left( \max_{m\geq 0} \frac{\# \left( J \cap [m+1,m+N] \right)}{N} \right),
\]
but the limit is known to exist (see for instance \cite{GreTomTom2010}), so that we can use the formula stated in \eqref{eq:Bdsup}.\newpage

In view of the inclusion \eqref{eq:RRec.Rec} and since $\Rec(T)$ is always a dense $G_{\delta}$-set, it is natural to ask if $\RRec(T)$ is also co-meager for every reiteratively recurrent operator. It was proved in \cite[Theorem~2.1]{BoGrLoPe2022} that this is the case whenever $T$ is also hypercyclic:\\[-17.5pt]
\begin{enumerate}[--]
	\item \cite[Theorem~2.1]{BoGrLoPe2022}: {\em If $T$ is reiteratively recurrent and hypercyclic, then $\RRec(T)$ is co-meager.}\\[-17.5pt]
\end{enumerate}
However, it was also shown in \cite[Example~2.4]{BoGrLoPe2022} that there exist operators $T$ for which the set $\RRec(T)$ can be dense and meager at the same time. By \cite[Theorem~2.1]{BoGrLoPe2022} it is clear that the mentioned examples are non-hypercyclic, but it can be checked that they are even non-cyclic. The next question, which we also solve here in the negative (see Section~\ref{Sec_3:RRec+cyclic.but.meager} below), was then posed in \cite[Problem~5.14]{GriLoPe2022}:\\[-5pt]

\begin{question}[\textbf{\cite{GriLoPe2022}}]\label{Q:RRec.co-meager}
	Let $T:X\longrightarrow X$ be reiteratively recurrent and cyclic. Is $\RRec(T)$ co-meager?
\end{question}

We solve Questions~\ref{Q:dense.lineable} and \ref{Q:RRec.co-meager} in the next sections by constructing counterexamples in every separable infinite-dimensional Banach space. In order to construct such examples we will use the notion of {\em biorthogonal sequence} (see Subsection~\ref{Sec_1.1:notation} below).

The paper is organized as follows: in Section~\ref{Sec_2:not.dense.lineable} we present a modification for the construction of ``{\em recurrent but not quasi-rigid operators}'' shown in \cite{GriLoPe2022}, to exhibit recurrent operators whose set of recurrent vectors is not dense lineable in every separable infinite-dimensional Banach space. This solves Question~\ref{Q:dense.lineable} in the negative. In Section~\ref{Sec_3:RRec+cyclic.but.meager} we construct reiteratively recurrent and cyclic operators whose set of reiteratively recurrent vectors is meager, which solves Question~\ref{Q:RRec.co-meager} in the negative.

We refer the reader to the textbooks \cite{BaMa2009_book} and \cite{GrPe2011_book} for any unexplained notion in Linear Dynamics.

\subsection{Notation for a general separable infinite-dimensional Banach space}\label{Sec_1.1:notation}

We will denote by $\KK$ the field of either real or complex numbers $\RR$ or $\CC$, given any (real or complex) separable infinite-dimensional Banach space $X$, we will denote by $X^*$ its {\em topological dual space}, and given any pair $(x,x^*) \in X\times X^*$ we will denote by $\ep{x^*}{x} := x^*(x)$ the standard dual evaluation.

In the next sections our operators will be built using a bounded biorthogonal sequence. In fact, by a classical result proved in \cite{OvPel1975}, given any separable infinite-dimensional Banach space $X$ we can consider a sequence $(e_k,e_k^*)_{k\in\NN} \subset X\times X^*$ with the following properties:
\begin{enumerate}[$\bullet$]
	\item $\lspan\{ e_k \ ; \ k \in \NN \}$ is dense in $X$;
	
	\item $\ep{e_k^*}{e_j} = \delta_{k,j}$ where $\delta_{k,j}=0$ if $k\neq j$ and $1$ if $k=j$;
	
	\item for each $k \in \NN$ we have that $\|e_k\| = 1$ and $K := \sup_{k\in\NN} \|e_k^*\|^* < \infty$.
\end{enumerate}

We will repeatedly use the fact that given any $x \in X$
\begin{equation}\label{eq:xk<=Kx}
	\left| \ep{e_k^*}{x} \right| \leq K \|x\| \quad \text{ for each } k \in \NN.
\end{equation}

We will always write $c_{00} := \lspan\{ e_k \ ; \ k \in \NN \}$. Note that for any vector $x \in c_{00}$ we have the following equalities
\[
x = \sum_{k \in \NN} \ep{e_k^*}{x} e_k = \sum_{k=1}^{k_x} \ep{e_k^*}{x} e_k \quad \text{ for some } k_x \in \NN.
\]
Note also that, in general, the first equality is false for arbitrary vectors unless the sequence $(e_k)_{k\in\NN}$ is a Schauder basis of the Banach space $X$.

\section{Dense but not dense lineable sets of recurrent vectors}\label{Sec_2:not.dense.lineable}

This section is devoted to show the following result, which solves Question~\ref{Q:dense.lineable}:

\begin{theorem}\label{The:not.dense.lineable}
	Let $X$ be any separable infinite-dimensional Banach space. There exists a recurrent operator $T:X\longrightarrow X$ whose set of recurrent vectors $\Rec(T)$ is not dense lineable.
\end{theorem}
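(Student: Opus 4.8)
The plan is to produce $T$ by explicitly modifying the construction of a ``recurrent but not quasi-rigid'' operator from \cite{GriLoPe2022}, working on $c_{00}=\lspan\{e_k\ ;\ k\in\NN\}$ through the bounded biorthogonal sequence $(e_k,e_k^*)_{k\in\NN}$ and extending by boundedness to $X$. The crucial difference from the earlier examples is that I do \emph{not} want every vector of $c_{00}$ to be recurrent; on the contrary, the operator must be arranged so that recurrence of a vector imposes a genuine \emph{localization} constraint on its coordinates. Concretely, I would group the basis into consecutive blocks and let $T$ act as a near-rigid map within each block while coupling the blocks so that the return times needed to bring two distinct blocks back close to their starting positions cannot be synchronized. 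The first thing to record is that $\Rec(T)$ is always a cone: if $T^{n_k}x\to x$ then $T^{n_k}(\lambda x)=\lambda\,T^{n_k}x\to\lambda x$, so $\lambda x\in\Rec(T)$ for every scalar $\lambda\neq0$. Hence the only possible obstruction to dense lineability is additive, and the whole argument must be built around the failure of $\Rec(T)$ to be closed under sums. This is also morally where recurrence departs from hypercyclicity: Bourdon's argument keeps $p(T)x$ hypercyclic and thereby manufactures a dense subspace, whereas no such closure property is available for recurrence.

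Two things then need to be proved about this $T$. \emph{Density:} $T$ is recurrent, which I would establish by exhibiting an explicit dense family of recurrent vectors, typically vectors that are, up to a small tail, supported on a single block, for which the within-block dynamics returns close to the identity along a suitable increasing sequence $(n_k)_{k\in\NN}$. \emph{Rigidity of the recurrent set:} a necessary condition for recurrence forcing that a recurrent vector be ``dominated'' by a single block, in the sense that two recurrent vectors whose dominant blocks differ cannot be added without destroying recurrence. This is the analytic heart of the proof: I would estimate $\|T^n x-x\|$ from below, using the biorthogonal functionals and the bound $|\ep{e_k^*}{x}|\leq K\|x\|$, for a vector $x$ that genuinely mixes two blocks, showing that along any candidate return sequence at least one block is pushed away from its starting position, so that $x\notin\Rec(T)$.

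The passage from this structural information to the theorem is then a short algebraic step. If $Y\subseteq\Rec(T)$ were a dense subspace, density would force it to contain a vector $x$ close to $e_i$ and a vector $y$ close to $e_j$ with $i$ and $j$ lying in different blocks; then $x$ and $y$ have distinct dominant blocks, so $x+y\in Y\subseteq\Rec(T)$ contradicts the non-recurrence of mixed vectors established above. Equivalently, every subspace contained in $\Rec(T)$ must be aligned to a single block, hence is bounded away from some basis vector and cannot be dense. This yields the non-dense-lineability of $\Rec(T)$ while $T$ remains recurrent.

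I expect the main obstacle to be the tension between the two requirements above: the operator must keep $\Rec(T)$ a dense $G_\delta$ set, rich enough to approximate every $e_k$, while simultaneously making the recurrence condition additively rigid enough that no dense subspace survives. Calibrating the block sizes, the within-block near-rigid maps, and the inter-block coupling so that single-block vectors recur but genuinely mixed vectors provably do \emph{not}, and making the lower bound on $\|T^n x-x\|$ uniform across all candidate return times, is where essentially all the work lies; once this is in place, the reduction to non-dense-lineability is immediate from the cone structure of $\Rec(T)$ together with the ``sum of two differently localized recurrent vectors is non-recurrent'' phenomenon.
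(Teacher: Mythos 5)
There is a genuine gap, and it sits exactly at the point you yourself identify as ``where essentially all the work lies'': the block-localization mechanism you propose cannot work, for a logical rather than a technical reason. If $\Rec(T)$ is dense and $i,j$ are indices lying in two different blocks, then there must exist a recurrent vector $x$ with $\|x-(e_i+e_j)\|<\tfrac{1}{2K}$, and by \eqref{eq:xk<=Kx} any such $x$ satisfies $|\ep{e_i^*}{x}|\geq\tfrac12$ and $|\ep{e_j^*}{x}|\geq\tfrac12$; that is, $x$ genuinely mixes the two blocks. So the dichotomy on which your final step rests --- recurrent vectors are dominated by a single block, mixed vectors are non-recurrent --- is incompatible with the recurrence of $T$, and no calibration of block sizes or inter-block couplings can restore it. (A related slip: the closure property you say is unavailable does hold --- if $T^{n_k}x\to x$ then $T^{n_k}p(T)x\to p(T)x$, so $p(T)$ maps $\Rec(T)$ into itself; what fails in the Herrero--Bourdon argument is that a recurrent vector need not be cyclic. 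In particular any counterexample must have no recurrent cyclic vector.)

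What is actually needed is a finer ``compatibility condition'' that is satisfied on a dense set of vectors yet cannot be satisfied by every vector of a dense subspace. The paper achieves this by pushing the obstruction into a finite-dimensional projection: it takes the rank-two projection $P$ onto $\lspan\{e_1,e_2\}$ and the operator $Tx=Rx+\sum_{k\geq3}\frac{1}{m_{k-1}}\ep{w_k^*}{Px}e_k$, where the functionals $w_k^*$ are unbounded renormalizations of a dense sequence $(\til{w}_n^*)$ in the sphere of $\lspan\{e_1^*,e_2^*\}$, normalized so that $|\ep{w_k^*}{z}|=1$ for one fixed vector $z\in\lspan\{e_1,e_2\}$. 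Recurrence is proved on the dense set $X_0=\{x\in c_{00}\ ;\ Px\in\bigcup_n\Ker(\til{w}_n^*)\}$ --- a countable union of lines in a $2$-dimensional space, dense but far from everything --- while a lower bound $\liminf_{n\to\infty}\|T^nx-x\|\geq\tfrac{1}{K\pi}$ shows that no $x$ with $Px=z$ is recurrent. The punchline is then finite-dimensional linear algebra: a dense subspace $Y\subset X$ would have $P(Y)=\lspan\{e_1,e_2\}$, hence would contain some $x$ with $Px=z$, a contradiction. Your sketch contains neither the idea of localizing the obstruction in a finite-dimensional projection nor a candidate for the dense-but-not-linear constraint set, and without something of that kind the passage from ``some vectors are non-recurrent'' to ``no dense subspace fits inside $\Rec(T)$'' does not go through.
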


We first assume that $X$ is a \textbf{complex} space and we modify the construction given in \cite[Section~3]{GriLoPe2022}, which was originally based on \cite{Auge2012}. Fix any complex separable infinite-dimensional Banach space $X$ and let $(e_k,e_k^*)_{k\in\NN} \subset X\times X^*$ be a biorthogonal sequence with the properties stated in Subsection~\ref{Sec_1.1:notation}. Grivaux, Peris and the first author of this paper consider in \cite{GriLoPe2022} an operator $T$ given by
\[
Tx := Rx + \sum_{k \geq 3} \frac{1}{m_{k-1}} \ep{w_k^*}{Px} e_k
\]
depending on an operator $R$, a sequence of integers $(m_k)_{k\in\NN}$, a projection $P$ and a sequence of functionals $(w^*_k)_{k\geq 3}$ with bounded norm. Our main modification is letting $(w^*_k)_{k\geq 3}$ to be unbounded.

\subsection{Constructing the operator $T$}

As in \cite{GriLoPe2022} we let $P:X\longrightarrow \lspan\{e_1,e_2\}$ be the projection of $X$ onto the span of $e_1$ and $e_2$ given by
\[
Px := \ep{e_1^*}{x} e_1+\ep{e_2^*}{x} e_2 \quad \text{ for every } x \in X.
\]
Note that $\|P\| \leq 2K$ so that $P$ is continuous. Set $E^* := \lspan\{e_1^*,e_2^*\}$, endowed with the norm $\|\cdot\|^*$ of the dual space $X^*$, and denote by $S_{E^*} := \left\{  w^* \in E^* \ ; \ \|w^*\|^* = 1 \right\}$ the sphere of the 2-dimensional space $E^*$. In \cite{GriLoPe2022} the authors consider for $(w^*_k)_{k\ge 3}$ a dense sequence in $S_{E^*}$, which exists since $S_{E^*}$ is a compact metrizable space, but this choice results in all vectors becoming recurrent for $T$. Since we want the set of recurrent vectors $\Rec(T)$ not to be dense lineable, we select here the functionals $w^*_k$ in such a way to get non-recurrent vectors. To this end, we first consider a dense sequence $(\til{w}_k^*)_{k\geq 3}$ in $S_{E^*}$ and a vector $z\in \lspan\{e_1,e_2\}$ such that 
\begin{equation}\label{eq:z}
	\ep{\til{w}_k^*}{z}\neq 0 \quad \text{ for all } k\geq 3.
\end{equation}
Such a vector $z$ exists since the family $\{\til{w}_k^*\ ;\ k \ge 3\}$ is countable. Given a partition $(A_n)_{n\geq 3}$ of the set $\{ k \in \NN \ ; \ k\geq 3 \}$ with $\# A_n = \infty$ for all $n\geq 3$, we then set
\[
w_k^* := \frac{1}{|\ep{\til{w}_n^*}{z}|} \til{w}_n^* \quad \text{ for each } k \in A_n.
\]
In this way, we have that
\begin{equation}\label{eq:w*.and.z}
	\|w_k^*\|^* \geq \frac{1}{\|z\|} \quad \text{ and } \quad |\ep{w_k^*}{z}|=1 \quad \text{ for every } k\geq 3.
\end{equation}

We now consider a sequence $(m_k)_{k\in\NN} \in \NN^{\NN}$ of positive integers with the following properties:
\begin{enumerate}[(a)]
	\item $m_k \mid m_{k+1}$ for each $k \geq 1$;
	
	\item $m_1 = 1 = m_2$;
\end{enumerate}
and starting from $k=3$, the sequence $(m_k)_{k\geq 3}$ grows fast enough to satisfy:
\begin{enumerate}[(a)]
	\setcounter{enumi}{2}
	\item $\displaystyle \lim_{j\to\infty} \left(m_{j-1} \cdot \sum_{k>j} \frac{\|w_k^*\|^*}{m_{k-1}} \right) = 0$.
\end{enumerate}

These properties are comparable to the properties required in \cite{GriLoPe2022}. The only difference relies on the last condition where we need to take into account the norm of $w_k^*$. The rest of the construction is similar: for each  $x \in c_{00} = \lspan\{ e_k \ ; \ k \in \NN \}$ we set
\[
Rx := \sum_{k\in\NN} \lambda_k \ep{e_k^*}{x} e_k,
\]
where $\lambda_k := \exp(2\pi i \frac{1}{m_k})$ for each $k \in \NN$. By \eqref{eq:xk<=Kx} we have for every $x \in c_{00}$ that
\[
\|Rx\| \leq \|Rx-x\| + \|x\| \leq \sum_{k\in\NN} |\lambda_k-1| \cdot \left| \ep{e_k^*}{x} \right| + \|x\| \leq \left( K \sum_{k\in\NN} |\lambda_k-1| + 1 \right) \|x\|
\]
and using that $|\exp(i\theta)-1| \leq |\theta|$ for every $\theta \in \RR$ we have that
\[
\sum_{k\in\NN} |\lambda_k-1| \leq 2\pi \sum_{k\in\NN} \frac{1}{m_k} < \infty,
\]
since \eqref{eq:w*.and.z} together with condition (c) on the sequence $(m_k)_{k\in\NN}$ show that the series $\sum_{k\in\NN} \frac{1}{m_k}$ is indeed convergent. Then, by the density of $c_{00}$ in the space $X$, the previous inequality also implies that the map $R:c_{00}\longrightarrow c_{00}$ extends to a bounded operator on $X$ still denoted by $R$. Finally, assumption (c) on the sequence $(m_k)_{k\in\NN}$ and the fact that $\|P\| \leq 2K$ imply that 
\[
\left\| \sum_{k\geq 3} \frac{1}{m_{k-1}} \ep{w_k^*}{Px} e_k \right\| \leq \sum_{k\geq 3} \frac{\|w_k^*\|^* \cdot \|Px\|}{m_{k-1}} \leq \left(2K \sum_{k\geq 3} \frac{\|w_k^*\|^*}{m_{k-1}}\right) \|x\|,
\]
so we can define the operator $T$ on $X$ by setting
\begin{equation}\label{eq:def.T}
	Tx := Rx + \sum_{k \geq 3} \frac{1}{m_{k-1}} \ep{w_k^*}{Px} e_k \quad \text{ for every } x \in X.
\end{equation}
It follows that the {\em $n$-th power} of $T$ can be computed exactly as in \cite[Fact~3.3.1]{GriLoPe2022}:

\begin{fact}[\textbf{\cite[Fact~3.3.1]{GriLoPe2022}}]\label{Fact:T.power.n}
	For every $x \in X$ and $n\geq 1$ we have that
	\[
	T^nx = R^nx + \sum_{k\geq 3} \frac{\lambda_{k,n}}{m_{k-1}} \ep{w_k^*}{Px} e_k,
	\]
	where \ $\lambda_{k,n} := \sum_{j=0}^{n-1} \lambda_k^j = \frac{\lambda_k^n-1}{\lambda_k-1}$ \ for each $k \geq 3$.
\end{fact}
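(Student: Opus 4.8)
The plan is to prove the formula by induction on $n \geq 1$, exploiting the diagonal action of $R$ together with the fact that the projection $P$ is \emph{frozen} along the orbit of $T$. First I would isolate the two building blocks: writing $Qx := \sum_{k \geq 3} \frac{1}{m_{k-1}} \ep{w_k^*}{Px} e_k$ for the perturbation, so that $T = R + Q$ by \eqref{eq:def.T}, I record that $R$ is diagonal with $Re_k = \lambda_k e_k$, that $Q$ takes values in $\lspan\{e_k \ ; \ k \geq 3\}$, and that $R$ and $P$ commute since both are diagonal with respect to $(e_k)_{k\in\NN}$.

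The crucial observation — and the one I would be careful not to overlook — is that conditions (a) and (b) force $m_1 = m_2 = 1$, hence $\lambda_1 = \lambda_2 = \exp(2\pi i) = 1$, so that $R$ restricts to the identity on $\lspan\{e_1,e_2\}$. Consequently $R^n(Px) = \lambda_1^n \ep{e_1^*}{x} e_1 + \lambda_2^n \ep{e_2^*}{x} e_2 = Px$ for every $n$, while $Pe_k = 0$ for $k \geq 3$ annihilates all perturbation terms. I would therefore first establish the preliminary identity $P(T^nx) = Px$ for all $n \geq 1$ as a separate step, making explicit its reliance on $\lambda_1 = \lambda_2 = 1$, since this is the only place where something could genuinely go wrong.

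With this in hand the induction is routine. The base case $n=1$ is exactly \eqref{eq:def.T}, as $\lambda_{k,1} = \lambda_k^0 = 1$. For the inductive step I would apply $T = R + Q$ to the assumed expression for $T^nx$: the term $R(T^nx)$ produces $R^{n+1}x$ together with $\sum_{k\geq 3} \frac{\lambda_k \lambda_{k,n}}{m_{k-1}} \ep{w_k^*}{Px} e_k$ (using $Re_k = \lambda_k e_k$), whereas the fresh perturbation $Q(T^nx) = \sum_{k\geq 3} \frac{1}{m_{k-1}} \ep{w_k^*}{P(T^nx)} e_k$ collapses, thanks to $P(T^nx) = Px$, to $\sum_{k\geq 3} \frac{1}{m_{k-1}} \ep{w_k^*}{Px} e_k$. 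Collecting the $k$-th coefficients yields the factor $\lambda_k \lambda_{k,n} + 1$, and the elementary computation $\lambda_k \lambda_{k,n} + 1 = \sum_{j=1}^{n} \lambda_k^j + 1 = \sum_{j=0}^{n} \lambda_k^j = \lambda_{k,n+1}$ closes the recursion.

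I do not expect any substantial obstacle, as the statement is essentially a transcription of \cite[Fact~3.3.1]{GriLoPe2022}; the whole content sits in the frozen-projection identity $P(T^nx) = Px$, which is why I would present it up front. Finally, the closed form $\lambda_{k,n} = \frac{\lambda_k^n - 1}{\lambda_k - 1}$ follows from the geometric-sum formula, which is legitimate because $m_k > 1$ for every $k \geq 3$ forces $\lambda_k \neq 1$ there.
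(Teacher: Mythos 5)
Your proof is correct. The paper itself gives no argument for this statement, simply citing \cite[Fact~3.3.1]{GriLoPe2022}, and your induction is exactly the natural (and surely the intended) one: the whole content is the frozen-projection identity $P(T^nx)=Px$, which you correctly isolate and correctly trace back to $\lambda_1=\lambda_2=1$ and to $Pe_k=0$ for $k\geq 3$, after which the recursion $\lambda_k\lambda_{k,n}+1=\lambda_{k,n+1}$ closes the step.
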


We will also need the following properties regarding the numbers $\lambda_{k,n}$:

\begin{fact}[\textbf{\cite[Fact~3.3.2]{GriLoPe2022}}]\label{Fact:lambda_k_n}
	Let $n\geq 1$. Then:
	\begin{enumerate}[{\em(i)}]
		\item $|\lambda_{k,n}| \leq n$ for all $k \geq 3$;
		
		\item $\lambda_{k,m_n}=0$ whenever $n\geq k\geq 3$;
		
		\item $|\lambda_{k,n}|\geq \frac{2}{\pi}n > \frac{m_{k-1}}{\pi}$ whenever $k = \min\{ j \geq 3 \ ; \ 2n \leq m_j \}$.
	\end{enumerate}
\end{fact}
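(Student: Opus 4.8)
The plan is to verify the three items separately, treating (i) and (ii) as quick consequences of the closed-form expressions for $\lambda_{k,n}$ and reserving the real work for (iii).

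For (i), I would simply invoke the triangle inequality on the partial-sum expression $\lambda_{k,n} = \sum_{j=0}^{n-1} \lambda_k^j$. Since $\lambda_k = \exp(2\pi i/m_k)$ lies on the unit circle, $|\lambda_k^j| = 1$ for every $j$, so $|\lambda_{k,n}| \leq \sum_{j=0}^{n-1} 1 = n$.

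For (ii), I would use the divisibility property (a) of the sequence $(m_k)_{k\in\NN}$. When $n \geq k \geq 3$, repeated application of $m_k \mid m_{k+1}$ gives $m_k \mid m_n$, so $m_n/m_k \in \NN$ and hence $\lambda_k^{m_n} = \exp(2\pi i \cdot m_n/m_k) = 1$. Feeding this into the quotient formula $\lambda_{k,m_n} = (\lambda_k^{m_n}-1)/(\lambda_k - 1)$ then yields $0$; note the denominator is nonzero because $m_k > 1$ for $k \geq 3$ forces $\lambda_k \neq 1$.

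The heart of the argument, and the step I expect to require the most care, is (iii). Here I would pass to the trigonometric form of the quotient. Writing $\lambda_k = \exp(i\theta)$ with $\theta = 2\pi/m_k$ and using the identity $|\exp(i\phi)-1| = 2|\sin(\phi/2)|$, one obtains
\[
|\lambda_{k,n}| = \frac{|\sin(n\pi/m_k)|}{|\sin(\pi/m_k)|}.
\]
The defining condition $k = \min\{j \geq 3 : 2n \leq m_j\}$ supplies the two inequalities I need: $2n \leq m_k$ (membership) and $m_{k-1} < 2n$ (minimality, which also holds at the boundary case $k=3$ since there $m_{k-1} = m_2 = 1 < 2n$). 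The first inequality places the angle $n\pi/m_k$ in $(0,\pi/2]$, where the concavity bound $\sin x \geq \frac{2}{\pi}x$ applies, giving the lower bound $\frac{2n}{m_k}$ on the numerator; combined with the elementary upper bound $\sin(\pi/m_k) \leq \pi/m_k$ on the denominator, the factors of $m_k$ cancel and leave $|\lambda_{k,n}| \geq \frac{2}{\pi}n$. The second inequality $m_{k-1} < 2n$ is precisely what upgrades this to $\frac{2}{\pi}n > m_{k-1}/\pi$, completing the chain. The only point demanding attention is ensuring the angle stays within $[0,\pi/2]$ so that both sine estimates are valid simultaneously, which is exactly what the membership condition $2n \leq m_k$ guarantees.
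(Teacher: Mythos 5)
Your proof is correct on all three items: the triangle inequality for (i), the divisibility $m_k \mid m_n$ killing the numerator $\lambda_k^{m_n}-1$ for (ii), and the sine estimates $\sin x \ge \tfrac{2}{\pi}x$ on $[0,\pi/2]$ (valid since $2n\le m_k$) against $\sin(\pi/m_k)\le \pi/m_k$, plus the minimality inequality $m_{k-1}<2n$ (including the boundary case $k=3$ via $m_2=1$), for (iii). Note that the paper itself states this fact as a citation of \cite[Fact~3.3.2]{GriLoPe2022} and gives no proof, so there is nothing to compare against; your argument is the standard one that the cited reference uses, and it is complete.
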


Let us now check that $T$ is a recurrent operator but that $\Rec(T)$ is not dense lineable.

\subsection{Recurrence properties of $T$}

\begin{proposition}\label{Pro:T.recurrent}
	The operator $T:X \longrightarrow X$ is recurrent.
\end{proposition}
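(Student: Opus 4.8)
The plan is to show that recurrent vectors are dense, which by definition means $\Rec(T)$ is dense in $X$. Since $c_{00}$ is dense in $X$ and density only requires a dense \emph{set} (not a subspace), I would aim to produce recurrent vectors approximating each element of $c_{00}$. The natural strategy exploits the structure of $T$: the diagonal part $R$ has eigenvalues $\lambda_k = \exp(2\pi i/m_k)$ which are roots of unity, and the perturbation involves the coefficients $\lambda_{k,n}$. The key observation from Fact~\ref{Fact:lambda_k_n}(ii) is that $\lambda_{k,m_n} = 0$ whenever $n \geq k \geq 3$, and moreover $R^{m_n}$ should act almost like the identity on the first few coordinates because $\lambda_k^{m_n} = \exp(2\pi i\, m_n/m_k) = 1$ whenever $m_k \mid m_n$, which holds for $k \leq n$ by the divisibility condition~(a).

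First I would fix $x \in c_{00}$, say $x = \sum_{k=1}^{k_x} \ep{e_k^*}{x} e_k$, and consider the subsequence of times $n = m_N$ for large $N$. Using Fact~\ref{Fact:T.power.n}, I would write
\[
T^{m_N} x = R^{m_N} x + \sum_{k \geq 3} \frac{\lambda_{k,m_N}}{m_{k-1}} \ep{w_k^*}{Px}\, e_k.
\]
For the diagonal term, $R^{m_N} x = \sum_k \lambda_k^{m_N} \ep{e_k^*}{x} e_k$; for indices $k \leq N$ with $k \leq k_x$ we have $\lambda_k^{m_N} = 1$ (divisibility), so those coordinates are reproduced exactly, while the remaining coordinates contribute a tail that I expect to control. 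For the perturbation sum, I would split at $k = N$: for $k \leq N$ (the relevant low indices) the factor $\lambda_{k,m_N} = 0$ kills those terms by Fact~\ref{Fact:lambda_k_n}(ii), and for $k > N$ I would bound $|\lambda_{k,m_N}| \leq m_N$ via Fact~\ref{Fact:lambda_k_n}(i) and use the growth condition~(c), namely $\lim_{j\to\infty}\bigl(m_{j-1}\sum_{k>j}\frac{\|w_k^*\|^*}{m_{k-1}}\bigr)=0$, to make the tail $\sum_{k>N}\frac{m_N \|w_k^*\|^*}{m_{k-1}}$ small.

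The heart of the argument is therefore to show $\|T^{m_N} x - x\| \to 0$ along a suitable increasing sequence of $N$. I would estimate
\[
\|T^{m_N} x - x\| \leq \|R^{m_N} x - x\| + \left\| \sum_{k > N} \frac{\lambda_{k,m_N}}{m_{k-1}} \ep{w_k^*}{Px}\, e_k \right\|,
\]
where the second term is at most $\|Px\| \sum_{k>N} \frac{|\lambda_{k,m_N}| \, \|w_k^*\|^*}{m_{k-1}} \leq \|Px\| \, m_N \sum_{k>N} \frac{\|w_k^*\|^*}{m_{k-1}}$, which tends to $0$ by condition~(c). For the first term I would note $R^{m_N} x - x = \sum_{k} (\lambda_k^{m_N}-1)\ep{e_k^*}{x} e_k$; since $x \in c_{00}$ has finitely many nonzero coordinates and each $\lambda_k$ is a root of unity of order dividing $m_N$ for $N$ large (so $\lambda_k^{m_N}=1$), this difference is eventually exactly zero for each fixed $x \in c_{00}$. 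Thus $T^{m_N} x \to x$, proving $x$ is recurrent. Since this holds for every $x \in c_{00}$, the dense set $c_{00}$ consists of recurrent vectors, so $\Rec(T)$ is dense and $T$ is recurrent.

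The main obstacle I anticipate is the bookkeeping in the perturbation tail: one must verify that condition~(c) is precisely strong enough, \emph{after} the unbounded rescaling of the $w_k^*$, to force the tail bound $m_N \sum_{k>N}\frac{\|w_k^*\|^*}{m_{k-1}}$ to vanish. This is exactly why the growth condition~(c) was stated with $\|w_k^*\|^*$ built in rather than with a uniform bound as in \cite{GriLoPe2022}, and confirming that the argument of \cite[Fact~3.3.1]{GriLoPe2022} carries through verbatim with this modified condition is the one place requiring genuine care.
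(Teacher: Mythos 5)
Your argument has a fatal gap: the claim that \emph{every} $x \in c_{00}$ is recurrent is false for this operator, and in fact it is contradicted by the very next proposition in the construction. The vector $z \in \lspan\{e_1,e_2\} \subset c_{00}$ satisfies $Pz = z$, and the paper shows $P^{-1}(\{z\}) \cap \Rec(T) = \emptyset$; indeed the whole point of making $(w_k^*)_{k\geq 3}$ unbounded is to destroy recurrence for such vectors (with bounded $w_k^*$, as in the original construction, all vectors are recurrent and $\Rec(T)$ is trivially dense lineable). The precise failure is in your tail estimate, at exactly the spot you flagged as needing "genuine care." After killing the terms $3 \leq k \leq N$ via Fact~\ref{Fact:lambda_k_n}(ii), you are left with $\sum_{k>N}$ and you bound it by $\|Px\|\, m_N \sum_{k>N} \frac{\|w_k^*\|^*}{m_{k-1}}$. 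But condition~(c) controls $m_{j-1}\sum_{k>j}\frac{\|w_k^*\|^*}{m_{k-1}}$, i.e.\ the multiplier is $m_{N-1}$, not $m_N$. Applying (c) with $j=N+1$ gives $m_N\sum_{k>N+1}\frac{\|w_k^*\|^*}{m_{k-1}} \to 0$, which leaves the single boundary term $k=N+1$ uncontrolled: it contributes
\[
\frac{|\lambda_{N+1,m_N}|}{m_N}\,\bigl|\ep{w_{N+1}^*}{Px}\bigr| \;\geq\; \frac{1}{\pi}\,\bigl|\ep{w_{N+1}^*}{Px}\bigr|
\]
by Fact~\ref{Fact:lambda_k_n}(iii) (when $m_{N+1} \geq 2m_N$), and since the $\|w_k^*\|^*$ are unbounded this does not tend to $0$ for generic $Px$; for $Px = z$ it is bounded below by $1/\pi$, which is exactly the non-recurrence mechanism of the next proposition.

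The paper's proof repairs this by \emph{not} working with all of $c_{00}$: it restricts to the dense set $X_0$ of $x \in c_{00}$ with $Px \in \bigcup_{n\geq 3}\Ker(\til{w}_n^*)$ (dense because $\{\til{w}_n^*\}$ is dense in $S_{E^*}$, so the union of their kernels is dense in the two-dimensional range of $P$), and it chooses the return times $m_{k_j-1}$ with $k_j$ running through the set $A_n$ for an $n$ with $\ep{\til{w}_n^*}{Px}=0$. With this choice the dangerous boundary term at $k = k_j$ vanishes identically, because $w_{k_j}^*$ is a scalar multiple of $\til{w}_n^*$ and hence $\ep{w_{k_j}^*}{Px}=0$; only then does the remaining tail $\sum_{k>k_j}$, with multiplier $m_{k_j-1}$, fall under condition~(c). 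You need both of these ingredients — the restricted dense set and the matched return times — and neither appears in your proposal.
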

\begin{proof}
	Since $\{\til{w}_n^* \ ; \ n \geq 3\}$ is dense in $S_{E^*}$, the union of the kernels $\bigcup_{n \geq 3} \Ker(\til{w}_n^*)$ is dense in the 2-dimensional space $\lspan\{e_1,e_2\}$. Then the set
	\[
	X_0 := \left\{ x\in c_{00} \ ; \ Px \in \bigcup_{n\geq 3} \Ker(\til{w}_n^*) \right\}
	\]
	is dense in $X$ since $X_0$ is dense in $c_{00}$, which is dense in $X$. We claim that $X_0 \subset \Rec(T)$. Indeed, given any $x = \sum_{k=1}^{k_x} \ep{e_k^*}{x} \in X_0$ pick $n\geq 3$ such that $\ep{\til{w}_n^*}{Px}=0$ and let $(k_j)_{j\in\NN}$ be the increasing sequence of integers forming the set $A_n$. By Fact~\ref{Fact:T.power.n} we have that
	\[
	T^{m_{k_j-1}}x - x = \left( R^{m_{k_j-1}}x - x \right) + \sum_{k\geq 3} \frac{\lambda_{k,m_{k_j-1}}}{m_{k-1}} \ep{w_k^*}{Px} e_k,
	\]
	and we will show that this is a $0$-convergent sequence as $j\to\infty$. We start by noticing that, by condition (a) on the sequence $(m_k)_{k\in\NN}$, we have the equality
	\begin{equation}\label{eq:R^.x=x}
		R^{m_{k_j-1}}x = \sum_{k=1}^{k_x} \lambda_k^{m_{k_j-1}} \ep{e_k^*}{x} e_k = \sum_{k=1}^{k_x} \ep{e_k^*}{x} e_k = x \qquad \text{ as soon as } k_j-1 \geq k_x.
	\end{equation}
	Thus, using that $\lambda_{k,m_{k_j-1}} = 0$ for $k_j-1 \geq k$ by (ii) of Fact~\ref{Fact:lambda_k_n} and also the equality $\ep{w_{k_j}^*}{Px} = 0$ for every $k_j\in A_n$, we deduce that
	\begin{equation}\label{eq:Tx-x.simplified}
		\left\| T^{m_{k_j-1}}x - x \right\| = \left\| \sum_{k>k_j} \frac{\lambda_{k,m_{k_j-1}}}{m_{k-1}} \ep{w_k^*}{Px} e_k \right\| \qquad \text{ as soon as } k_j-1\geq k_x.
	\end{equation}
	Moreover, by (i) of Fact~\ref{Fact:lambda_k_n} we have that $|\lambda_{k,m_{k_j-1}}| \leq m_{k_j-1}$ for every $k \in \NN$, and using also the technical condition (c) on the sequence $(m_k)_{k\in\NN}$ we finally get (as soon as $k_j-1\geq k_x$) that
	\[
	\left\| T^{m_{k_j-1}}x - x \right\| \leq \sum_{k > k_j} \frac{|\lambda_{k,m_{k_j-1}}|}{m_{k-1}} \|w_k^*\|^* \cdot \|Px\| \leq 2 K \|x\| \left( m_{k_j-1} \cdot \sum_{k>k_j} \frac{\|w_k^*\|^*}{m_{k-1}}\right) \underset{j\to \infty}{\longrightarrow} 0,
	\]
	which implies that $x \in \Rec(T)$. The density of $X_0$ shows that $T$ is recurrent.
\end{proof}

\begin{proposition}
	The set $\Rec(T)$ is not dense lineable.
\end{proposition}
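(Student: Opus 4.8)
The plan is to locate a single vector of $\lspan\{e_1,e_2\}$ that is the $P$-image of no recurrent vector, and then to observe that a dense subspace sitting inside $\Rec(T)$ cannot afford such a gap in its $P$-image. Concretely, I claim that
\[
\{u\in X \ ; \ Pu = z\}\cap\Rec(T) = \emptyset,
\]
where $z$ is the vector fixed in \eqref{eq:z}--\eqref{eq:w*.and.z}. Granting this, suppose for contradiction that $\Rec(T)$ contains a dense vector subspace $M$. Since $P$ is continuous and linear with $P(X)=\lspan\{e_1,e_2\}$, the image $P(M)$ is a linear subspace of the two-dimensional space $\lspan\{e_1,e_2\}$ which is moreover dense in it, hence equal to it. In particular $z\in P(M)$, so there is $u\in M\subset\Rec(T)$ with $Pu=z$, contradicting the displayed disjointness. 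Thus no such $M$ exists and $\Rec(T)$ is not dense lineable.

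The heart of the matter is therefore the claimed disjointness, and its proof rests on two ingredients. The first is the elementary but crucial observation that, because $K=\sup_{k}\|e_k^*\|^*<\infty$, every vector of $X$ has vanishing coordinates: the set $Y_0:=\{u\in X \ ; \ \ep{e_k^*}{u}\to 0\}$ is a closed subspace by \eqref{eq:xk<=Kx} (if $u_n\to u$ then $\limsup_k|\ep{e_k^*}{u}|\le K\|u-u_n\|$ for every $n$), and it contains the dense subspace $c_{00}$, so $Y_0=X$. The second ingredient is the uniform lower bound $|\ep{w_k^*}{z}|=1$ from \eqref{eq:w*.and.z}, which is precisely what the renormalization of the functionals $w_k^*$ was designed to produce.

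With these in hand I would argue as follows. Fix $u$ with $Pu=z$ and write $c_k:=\ep{w_k^*}{Pu}=\ep{w_k^*}{z}$, so $|c_k|=1$ for all $k\ge 3$. For each $n\ge1$ let $k^*(n):=\min\{j\ge3 \ ; \ 2n\le m_j\}$ be the index appearing in part (iii) of Fact~\ref{Fact:lambda_k_n}; note $k^*(n)\to\infty$ as $n\to\infty$. Reading off the $e_{k^*(n)}$-coordinate from Fact~\ref{Fact:T.power.n} (and using that $R$ acts diagonally) gives
\[
\ep{e_{k^*(n)}^*}{T^nu-u} = \bigl(\lambda_{k^*(n)}^{\,n}-1\bigr)\ep{e_{k^*(n)}^*}{u} + \frac{\lambda_{k^*(n),n}}{m_{k^*(n)-1}}\,c_{k^*(n)},
\]
where by Fact~\ref{Fact:lambda_k_n}(iii) the last summand has modulus $>\frac{1}{\pi}|c_{k^*(n)}|=\frac1\pi$, while $\bigl|(\lambda_{k^*(n)}^{\,n}-1)\ep{e_{k^*(n)}^*}{u}\bigr|\le 2\,|\ep{e_{k^*(n)}^*}{u}|\to0$ since $k^*(n)\to\infty$ and $u\in Y_0=X$. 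Combining this with \eqref{eq:xk<=Kx} yields
\[
\|T^nu-u\| \ge \frac1K\,\bigl|\ep{e_{k^*(n)}^*}{T^nu-u}\bigr| \ge \frac1K\Bigl(\frac1\pi - 2\,|\ep{e_{k^*(n)}^*}{u}|\Bigr),
\]
whence $\liminf_{n\to\infty}\|T^nu-u\|\ge\frac1{\pi K}>0$, so that $u\notin\Rec(T)$, as claimed.

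The step I expect to require the most care is the passage from $c_{00}$ to an arbitrary $u\in X$: the ``moving coordinate'' $k^*(n)$ is the only place where part (iii) of Fact~\ref{Fact:lambda_k_n} guarantees a large multiplier, and for $u\notin c_{00}$ the competing term $(\lambda_{k^*(n)}^{\,n}-1)\ep{e_{k^*(n)}^*}{u}$ must be shown to be negligible. This is exactly what the identity $Y_0=X$ delivers, and it is here that the boundedness of the biorthogonal \emph{functionals} $e_k^*$ (and not merely of the perturbation used to build $T$) plays its decisive role; without it a vector with non-vanishing coordinates could in principle conspire to cancel the large summand along a subsequence of return times, and the argument would collapse.
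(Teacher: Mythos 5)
Your proposal is correct and follows essentially the same route as the paper: you isolate the fibre $P^{-1}(\{z\})$, show it misses $\Rec(T)$ by reading off the $e_{k^*(n)}^*$-coordinate of $T^nu-u$ via Fact~\ref{Fact:T.power.n} and the lower bound in Fact~\ref{Fact:lambda_k_n}(iii), control the diagonal term using that $\ep{e_k^*}{u}\to 0$ for every $u\in X$ (a consequence of the density of $c_{00}$ and the uniform bound $K$ on the functionals), and conclude via the surjectivity of $P$ on any dense subspace. The only cosmetic difference is that you make the bound $|\lambda_{k}^n-1|\le 2$ explicit where the paper simply lets the whole term tend to zero.
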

\begin{proof}
	Let $z$ be the vector considered in \eqref{eq:z}. We start by showing that $P^{-1}(\{z\}) \cap \Rec(T) = \emptyset$. Indeed, let $x\in P^{-1}(\{z\})$, $n\geq 1$ and $k_n := \min\{ j \geq 3 \ ; \ 2n \leq m_j \}$. Note that by (iii) of Fact~\ref{Fact:lambda_k_n}
	\begin{equation}\label{eq:lambda_k_n}
		|\lambda_{k_n,n}| > \tfrac{m_{k_n-1}}{\pi}.
	\end{equation}
	Then
	\begin{eqnarray*}
		\|T^nx-x\| &\geq& \frac{1}{K} \left| \ep{e_{k_n}^*}{T^nx-x} \right| \hspace{4.2cm} \text{ by \eqref{eq:xk<=Kx},}\\[10pt]
		&=& \frac{1}{K} \left| \ep{e_{k_n}^*}{R^nx-x} + \frac{\lambda_{k_n,n}}{m_{k_n-1}} \ep{w_{k_n}^*}{z} \right| \hspace{1.25cm} \text{ by Fact~\ref{Fact:T.power.n},}\\[10pt]
		&>& \frac{1}{K} \left( \frac{1}{\pi} - \left| \ep{e_{k_n}^*}{R^nx-x} \right| \right) \hspace{3cm} \text{by \eqref{eq:w*.and.z} and \eqref{eq:lambda_k_n}.}
	\end{eqnarray*}
	Since $c_{00}$ is dense in $X$ and $e_{k_n}^*$ is continuous we will have, by definition of $R$ on $c_{00}$, that 	
	\[
	\ep{e_{k_n}^*}{R^nx-x}=(\lambda_{k_n}^n-1)\ep{e_{k_n}^*}{x}.
	\]
	Moreover, $\ep{e_{k_n}^*}{x}$ tends to $0$ as $k_n$ tends to infinity because $c_{00}$ is dense in $X$. We deduce that
	\[
	\liminf_{n\to\infty} \left\| T^nx - x \right\| \geq \frac{1}{K\pi},
	\]
	and we conclude that $x \notin \Rec(T)$. Finally, if $\Rec(T)$ contained a dense subspace we would have the equality $P(\Rec(T)) = \lspan\{ e_1, e_2 \}$. This contradicts the fact that $z \notin P(\Rec(T))$.
\end{proof}

The \textbf{complex} version of Theorem~\ref{The:not.dense.lineable} is now proved, but the construction can be easily adapted to the \textbf{real} case using the same arguments as in \cite[Section~3.2]{Auge2012}. It follows that every (real or complex) separable infinite-dimensional Banach space supports a recurrent operator whose set of recurrent vectors is not dense lineable, and Question~\ref{Q:dense.lineable} is now solved.

\begin{remark}\label{Rem:AP.not.dense.lineable}
	The operators constructed in this section fulfill a stronger recurrence notion than the usual one, namely {\em $\AP$-recurrence}:
	\begin{enumerate}[--]
		\item A vector $x \in X$ is called {\em $\AP$-recurrent} for an operator $T:X\longrightarrow X$ if for every neighbourhood $U$ of $x$ the return set $\Nc_T(x,U) = \{ n \geq 1 \ ; \ T^nx \in U \}$ contains arbitrarily long arithmetic progressions; and $T$ is called an {\em $\AP$-recurrent operator} if its set of $\AP$-recurrent vectors, $\AP\Rec(T)$, is dense.
	\end{enumerate}
	In \cite{CarMur2022_MS} it is shown that the inclusions $\RRec(T) \subset \AP\Rec(T) \subset \Rec(T)$ hold for every operator $T$ acting on a Banach space $X$. Moreover, it is also shown that the set $\AP\Rec(T)$ is dense in $X$ if and only if $T$ is {\em topologically multiply recurrent} (see \cite[Proposition~2.2]{CarMur2022_MS}).
	
	For the operators $T$ constructed in this section, the set $X_0$ considered in Proposition~\ref{Pro:T.recurrent} is easily checked to be included in $\AP\Rec(T)$ thanks to condition (c) on the sequence $(m_k)_{k\in\NN}$. Let us quickly argue this fact: fix $\eps>0$ and any length $L \in \NN$, pick any $x = \sum_{k=1}^{k_x} \ep{e_k^*}{x} e_k \in X_0\backslash\{0\}$ and $n\geq 3$ such that $\ep{\til{w}_n^*}{Px}=0$, and let $(k_j)_{j\in\NN}$ be the increasing sequence of integers forming the set $A_n$. Using condition (c) we can choose $j \in \NN$ fulfilling that $k_j-1\geq k_x$ and 
	\[
	m_{k_j-1} \cdot \sum_{k>k_j} \frac{\|w_k^*\|^*}{m_{k-1}} < \frac{\eps}{2K\|x\|L}.
	\]
	Thus, for each $1\leq \ell \leq L$ we have that $R^{\ell \cdot m_{k_j-1}}x = x$ just as in \eqref{eq:R^.x=x} and arguing as in \eqref{eq:Tx-x.simplified} we get that
	\[
	\left\| T^{\ell \cdot m_{k_j-1}}x - x \right\| = \left\| \sum_{k>k_j} \frac{\lambda_{k,\ell \cdot m_{k_j-1}}}{m_{k-1}} \ep{w_k^*}{Px} e_k \right\| \leq 2 K \|x\| \ell \left( m_{k_j-1} \cdot \sum_{k>k_j} \frac{\|w_k^*\|^*}{m_{k-1}}\right) < \eps.
	\]
	This means that the return set $\Nc_T(x,U)$ from $x$ to $U := \{ y \in X \ ; \ \|x-y\|<\eps \}$ contains an arithmetic progression of length $L$, namely $\{ \ell \cdot m_{k_j-1} \ ; \ 1\leq \ell\leq L \}$. The arbitrariness of $\eps$ and $L$ implies that $X_0 \subset \AP\Rec(T)$ and we have even proved the following result, stronger than Theorem~\ref{The:not.dense.lineable}:
	\begin{enumerate}[--]
		\item {\em Every (real or complex) separable infinite-dimensional Banach space $X$ supports an $\AP$-recurrent operator $T:X\longrightarrow X$ for which the set of recurrent vectors $\Rec(T)$, and thus also the set of $\AP$-recurrent vectors $\AP\Rec(T)$, are not dense lineable}.
	\end{enumerate}
\end{remark}

To link to the next section note that, by \cite[Lemma~4.8]{KwiLiOpYe2017}, the set $\AP\Rec(T)$ is always a $G_{\delta}$-set and hence a co-meager set when $T$ is $\AP$-recurrent. This is however not always the case for the set of reiteratively recurrent vectors $\RRec(T)$ even if $T$ is cyclic as we show below.

\section{Dense but not co-meager sets of reiteratively recurrent vectors}\label{Sec_3:RRec+cyclic.but.meager}

In this section we prove the following result, which solves Question~\ref{Q:RRec.co-meager}:

\begin{theorem}\label{The:RRec+cyclic.but.meager}
	Let $X$ be any separable infinite-dimensional Banach space. There exists a reiteratively recurrent cyclic operator $T:X\longrightarrow X$ whose set of reiteratively recurrent vectors $\RRec(T)$ is meager.
\end{theorem}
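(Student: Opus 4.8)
The plan is to build $T$ as a perturbation $T = R + K$ of a diagonal operator with root-of-unity eigenvalues, in the spirit of Section~\ref{Sec_2:not.dense.lineable}, but now tuned so that a \emph{dense} set of vectors is genuinely (not merely $\AP$-)reiteratively recurrent while the generic vector fails to be so. Concretely I would take $Re_k = \lambda_k e_k$ with $\lambda_k = \exp(2\pi i /m_k)$ along a divisibility chain $m_k \mid m_{k+1}$, $m_k \to \infty$, now with the $\lambda_k$ pairwise \emph{distinct}, and $K$ a compact, strictly upper-triangular coupling whose size is governed by a fast-growing $(m_k)$ exactly as in condition~(c), and active on a dense set of coordinates. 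The power formula of Fact~\ref{Fact:T.power.n}, through the geometric sums $\lambda_{k,n}$, will again be the main computational tool; the two features that matter are that $\lambda_{k,m_j}=0$ for $j\geq k$ (Fact~\ref{Fact:lambda_k_n}(ii)), so returns remain possible along the sparse times $m_j$, and that $|\lambda_{k,n}|$ is of order $n$ off these times (Fact~\ref{Fact:lambda_k_n}(iii)), which is precisely the non-normality that will destroy positive density.

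First I would establish reiterative recurrence and cyclicity. Since $T$ is upper-triangular with distinct diagonal $\lambda_k$, I would solve $Tf_k=\lambda_k f_k$ recursively to produce eigenvectors $f_k = e_k + (\text{higher-order tail})$, the tails converging thanks to~(c); then $\lspan\{f_k\ ;\ k\in\NN\}$ is dense and every finite combination $\sum_{k\leq J} a_k f_k$ lies in $\Ker(T^{m_J}-I)$, hence is periodic and therefore reiteratively recurrent, a periodic vector having a syndetic return set of positive upper Banach density. This yields a dense set of honestly reiteratively recurrent vectors, so $T$ is reiteratively recurrent. Cyclicity comes from the same eigenstructure: with distinct $\lambda_k$ and a complete system of eigenvectors, a vector $x_0 = \sum_k a_k f_k$ with all $a_k\neq 0$ and suitably decaying is cyclic, by the usual Vandermonde argument on finite sections combined with the density of $\lspan\{f_k\}$. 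Note that $T$ is necessarily \emph{non}-hypercyclic: by \cite[Theorem~2.1]{BoGrLoPe2022} a reiteratively recurrent hypercyclic operator has co-meager $\RRec$, which we are about to contradict, so cyclicity is the most one can ask for.

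The heart of the argument, and the step I expect to be hardest, is the meagerness of $\RRec(T)$; I would obtain it by showing that $\RRec(T)$ is contained in the set of periodic vectors $\bigcup_{N\geq 1}\Ker(T^N-I)$. Since $m_k\to\infty$ forbids any single $N$ from acting as the identity on an open set, each $\Ker(T^N-I)$ is a proper closed subspace, hence nowhere dense, and the countable union is meager. To prove this inclusion I would fix a non-periodic $x$ and show that its return set to some fixed ball has zero upper Banach density. The mechanism is that, by Fact~\ref{Fact:T.power.n}, the coordinate $\ep{e_k^*}{T^n x - x}$ carries a perturbation contribution of order $|\lambda_{k,n}|/m_{k-1}$, which by Fact~\ref{Fact:lambda_k_n}(iii) grows linearly in $n$; hence as $n$ increases, more and more coordinates $k$ must satisfy $\lambda_{k,n}\approx 0$ in order that $\|T^n x - x\|$ be small, and the number $J(n)$ of such constrained coordinates tends to infinity. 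As $\lambda_{k,n}\approx 0$ essentially forces $m_k\mid n$, a near-return at time $n$ forces divisibility by $m_{J(n)}\to\infty$, so in any window $[m+1,m+N]$ the proportion of return times is at most roughly $1/m_{J}+1/N\to 0$, giving $\Bdsup(\Nc_T(x,U))=0$. Turning the linear growth of $\lambda_{k,n}$ into a genuine lower bound for $\|T^n x - x\|$ off a set of zero upper Banach density, uniformly enough to handle a \emph{fixed} neighbourhood of $x$ and using only the biorthogonal estimate \eqref{eq:xk<=Kx} in place of an unconditional basis, is the delicate core of the proof. The real case is then recovered by the realification argument of \cite[Section~3.2]{Auge2012}, exactly as for Theorem~\ref{The:not.dense.lineable}.
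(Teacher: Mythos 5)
Your construction and your treatment of reiterative recurrence and cyclicity are fine and essentially coincide with the paper's (which takes $T=D_{\sbf{\lambda}}+B_{\sbf{\omega}}$, a direct sum of $2\times 2$ upper-triangular blocks, and derives a dense span of periodic unimodular eigenvectors plus a co-meager set of cyclic vectors from the distinctness of the diagonal entries). The gap is in the meagerness step: your key claim that $\RRec(T)\subset\bigcup_{N\geq 1}\Ker(T^N-I)$ is false for any operator of this type. Indeed, take $x=\sum_{k}a_kf_k$ with $a_k>0$ and $\sum_k a_k\|f_k\|<\infty$ (in the paper's concrete operator one can simply take $x=\sum_j 2^{-j}e_{2j-1}$, the $e_{2j-1}$ being genuine eigenvectors). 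Given $\eps>0$, choose $J$ with $\sum_{k>J}a_k\|f_k\|<\eps/4$; for every $n$ that is a multiple of $L_J:=\mathrm{lcm}$ of the orders of $\lambda_1,\dots,\lambda_J$ one gets $\|T^nx-x\|\leq 2\sum_{k>J}a_k\|f_k\|<\eps$, and the multiples of $L_J$ have positive density $1/L_J$. So $x$ is frequently, hence reiteratively, recurrent; yet $x$ is not periodic, because the orders of the $\lambda_k$ are unbounded (for the concrete $x$ above, $\ep{e_{2j-1}^*}{T^Nx-x}=2^{-j}(\lambda_{2j-1}^N-1)\neq 0$ for all large $j$, whatever $N$ is). Thus $\RRec(T)$ strictly contains the periodic vectors and your route to meagerness collapses. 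The same example shows why your quantitative mechanism cannot be repaired as stated: a \emph{fixed} neighbourhood $U$ of $x$ only constrains the finitely many coordinates of $x$ that are not already small, so the number $J(n)$ of ``constrained'' coordinates does not tend to infinity, and the return set typically has positive density of order $1/L_J$ rather than zero upper Banach density.

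What the paper does instead is to exhibit a co-meager $G_\delta$-set disjoint from $\RRec(T)$, namely $G=\{x\ ;\ |\ep{e_{2j}^*}{x}|>1/(m_j|\omega_{2j-1}|)$ for infinitely many $j\}$: the set of vectors whose coordinates in the directions amplified by the off-diagonal coupling are \emph{not too small} infinitely often. For $x\in G$ and such a $j$, the $(1,2)$ entry of $A_j^n$ has modulus at least $2m_j|\omega_{2j-1}|/\pi$ for all $n$ outside a set meeting every window of length $m_j^2$ in at most $2m_j$ points, so $|\ep{e_{2j-1}^*}{T^nx}|$ stays large there and $T^nx\notin U$; letting $j\to\infty$ along these indices gives $\Bdsup(\Nc_T(x,U))=0$. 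This is compatible with $\RRec(T)$ being dense yet meager, which your claimed inclusion would have excluded. You need to replace your meagerness step by an argument of this kind: identify a residual set of ``bad'' vectors and bound their return sets quantitatively, rather than trying to show that all non-periodic vectors fail to be reiteratively recurrent.
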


Recall that a vector $x \in X$ is called {\em cyclic} for an operator $T:X\longrightarrow X$ on a (real or complex) Banach space $X$ if for every non-empty open subset $U \subset X$ there exists a (real or complex) polynomial $p$ such that $p(T)x \in U$; and that $T$ is called a {\em cyclic operator} if it admits a cyclic vector.

\begin{remark}\label{Rem:cyclicity}
	The following is a well-known sufficient condition for $T:X\longrightarrow X$ to be cyclic:
	\begin{enumerate}[--]
		\item {\em For every pair of non-empty open subsets $U,V \subset X$ there exists a (real or complex) polynomial $p$ such that $p(T)(U) \cap V \neq \emptyset$}.
	\end{enumerate}
	Indeed, if this holds and we select a countable base $(U_n)_{n\in\NN}$ of non-empty open sets for $X$, then
	\[
	\bigcap_{n\in\NN} \bigcup_{p \text{ polynomial}} p(T)^{-1}(U_n),
	\]
	is clearly a dense $G_{\delta}$-set of cyclic vectors for $T$.
\end{remark}

\subsection[The family of operators T-lambda-omega]{The family of operators $T_{\sbf{\lambda},\sbf{\omega}}$}

In order to prove Theorem~\ref{The:RRec+cyclic.but.meager} we will use the operators $T_{\sbf{\lambda},\sbf{\omega}} := D_{\sbf{\lambda}} + B_{\sbf{\omega}}$, where $D_{\sbf{\lambda}}$ is a diagonal operator with weights $\sbf{\lambda}=(\lambda_k)_{k\in\NN}$ just as the operator $R$ of Section~\ref{Sec_2:not.dense.lineable}, and where $B_{\sbf{\omega}}$ is the usual unilateral backward shift with weights $\sbf{\omega}=(\omega_k)_{k\in\NN}$. These operators have been considered in the recent work \cite[Chapter~4]{GriMaMe2021_book}, acting on the complex spaces $c_0(\NN)$ and $\ell^p(\NN)$, with the objective of distinguishing the notion of {\em ergodicity} from that of {\em ergodicity in the Gaussian sense}.

Restricted to the vector subspace $c_{00} = \{ e_k \ ; \ k \in \NN \}$ the operator $T_{\sbf{\lambda},\sbf{\omega}}$ can be seen as the following infinite matrix
\[
T_{\sbf{\lambda},\sbf{\omega}} = 
\begin{pmatrix}
	\lambda_1 & \omega_1 & 0 & 0 & \cdots \\
	0 & \lambda_2 & \omega_2 & 0 & \cdots \\
	0 & 0 & \lambda_3 & \omega_3 & \cdots \\
	0 & 0 & 0 & \lambda_4 & \ddots  \\
	\vdots & \vdots  & \vdots & \vdots & \ddots \\
\end{pmatrix} : c_{00} \longrightarrow c_{00} \ , \quad  \left(\ep{e_k^*}{x}\right)_{k\in\NN} \longmapsto \left( \lambda_k\ep{e_k^*}{x} + \omega_k\ep{e_{k+1}^*}{x} \right)_{k\in\NN} \ ,
\]
and $T_{\sbf{\lambda},\sbf{\omega}}$ extends to a continuous operator on $c_0(\NN)$ and $\ell^p(\NN)$ as soon as $\sbf{\lambda}=(\lambda_k)_{k\in\NN}$ and $\sbf{\omega}=(\omega_k)_{k\in\NN}$ are bounded sequences. Since we want to prove the result for a general Banach space we will have to require stronger conditions on the previous sequences in order to guarantee continuity. Indeed, the sequence $\sbf{\lambda}=(\lambda_k)_{k\in\NN}$ will have to converge very fast to $1$ while $\sbf{\omega}=(\omega_k)_{k\in\NN}$ will have to converge very fast to $0$. See Subsection~\ref{Sec_3.2:lambda.omega} for the precise selection of these sequences.

Moreover, and since our objective is constructing a reiteratively recurrent and cyclic operator, we will need some sufficient conditions to guarantee that an operator of the form $T_{\sbf{\lambda},\sbf{\omega}}$ satisfies these dynamical properties. In Lemma~\ref{Lem:eigenvectors+cyclicity} below we will prove a much more general fact regarding the so-called {\em upper-triangular operators}. Let us start by fixing our setting:

\begin{definition}
	Let $X$ be a separable infinite-dimensional Banach space $X$, consider a biorthogonal sequence $(e_k,e_k^*)_{k\in\NN} \subset X\times X^*$ with the properties stated in Subsection~\ref{Sec_1.1:notation} and let $T:X\longrightarrow X$ be a continuous linear operator. We say that $T$ is an {\em upper-triangular operator with respect to $(e_k,e_k^*)_{k\in\NN}$} if the restriction
	\[
	T\res_{c_{00}}:c_{00}\longrightarrow c_{00},
	\]
	of the operator $T$ to the vector subspace $c_{00}=\lspan\{ e_k \ ; \ k \in \NN \}$, can be written as an upper-triangular infinite matrix, that is
	\[
	T\res_{c_{00}} = 
	\begin{pmatrix}
		\lambda_1 & & & & \\
		 & \lambda_2 & & (*) & \\
		 & & \lambda_3 & & \\
		 & (0) & & \lambda_4 & \\
		 & & & & \ddots \\
	\end{pmatrix}.
	\]
	The sequence $\sbf{\lambda}=(\lambda_k)_{k\in\NN} \in \KK^{\NN}$ will be called the {\em diagonal of $T$}.
\end{definition}

\begin{lemma}\label{Lem:eigenvectors+cyclicity}
	Let $X$ be any separable infinite-dimensional Banach space and consider a biorthogonal sequence $(e_k,e_k^*)_{k\in\NN} \subset X\times X^*$ with the properties stated in Subsection~\ref{Sec_1.1:notation}. Suppose that $T:X\longrightarrow X$ is an upper-triangular operator with respect to $(e_k,e_k^*)_{k\in\NN}$, denote by $\sbf{\lambda}=(\lambda_k)_{k\in\NN} \in \KK^{\NN}$ the diagonal of the operator $T$ and assume that $\lambda_k \neq \lambda_l$ for every $k\neq l \in \NN$. Then:
	\begin{enumerate}[{\em(a)}]
		\item The vector subspace $\lspan\left(\bigcup_{k \in \NN} \Ker(T - \lambda_k I) \right)$ is dense in $X$.
		
		\item The set of cyclic vectors for $T$ is co-meager in $X$.
	\end{enumerate}
\end{lemma}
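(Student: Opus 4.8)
The plan is to first produce, for every index $k$, an eigenvector of $T$ associated with $\lambda_k$ lying in $c_{00}$, and then to feed these eigenvectors into the transitivity criterion of Remark~\ref{Rem:cyclicity} in order to establish the co-meagerness of the set of cyclic vectors.

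For part (a), I would construct eigenvectors $v_k \in \lspan\{e_1,\dots,e_k\}$ with $Tv_k = \lambda_k v_k$ by induction on $k$, exploiting the upper-triangular structure of $T\res_{c_{00}}$. Since $Te_1 = \lambda_1 e_1$ we may take $v_1 = e_1$. Assuming $v_1,\dots,v_{k-1}$ have been built with $\lspan\{v_1,\dots,v_{k-1}\} = \lspan\{e_1,\dots,e_{k-1}\}$, upper-triangularity gives $Te_k = \lambda_k e_k + w$ with $w \in \lspan\{e_1,\dots,e_{k-1}\} = \lspan\{v_1,\dots,v_{k-1}\}$; writing $w = \sum_{j<k} c_j v_j$ and searching for $v_k = e_k + \sum_{j<k} d_j v_j$, the eigenvector equation $Tv_k = \lambda_k v_k$ reduces, coordinate by coordinate in the $v_j$, to $d_j(\lambda_k - \lambda_j) = c_j$, which is solvable precisely because the eigenvalues are pairwise distinct. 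The resulting vectors satisfy $\lspan\{v_1,\dots,v_k\} = \lspan\{e_1,\dots,e_k\}$ for every $k$, so $\lspan\{v_k \ ; \ k \in \NN\} = c_{00}$ is dense in $X$, and since each $v_k \in \Ker(T - \lambda_k I)$ this proves (a).

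For part (b), I would verify the sufficient condition of Remark~\ref{Rem:cyclicity}: given non-empty open sets $U, V \subset X$, I want a polynomial $p$ with $p(T)(U) \cap V \neq \emptyset$. Using the density of $c_{00} = \lspan\{v_k\}$, I would choose $u_0 \in U$ and $v_0 \in V$ that are finite combinations of the $v_k$, and let $F$ be a finite index set containing the supports of both. The crucial algebraic point is that $p(T)v_k = p(\lambda_k) v_k$, so that $p(T)u_0 = \sum_{k \in F} a_k\, p(\lambda_k)\, v_k$ where $u_0 = \sum_{k\in F} a_k v_k$; writing $v_0 = \sum_{k\in F} b_k v_k$, it then suffices to find $p$ with $p(\lambda_k) = b_k/a_k$ for all $k\in F$, which exists by Lagrange interpolation since the nodes $\lambda_k$ are distinct. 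This yields $p(T)u_0 = v_0 \in V$, as required.

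The main obstacle — indeed the only delicate point — is that this last step requires $a_k \neq 0$ for every $k \in F$, which may fail when the support of $v_0$ is larger than that of $u_0$. I would resolve this by perturbing $u_0$ inside the open set $U$: adding small multiples $\eps\, v_k$ of the missing eigenvectors keeps the vector in $U$ (for $\eps$ small enough) while making all the coefficients $a_k$ nonzero, after which the interpolation argument applies verbatim. Once the criterion of Remark~\ref{Rem:cyclicity} is verified it produces a dense $G_{\delta}$-set of cyclic vectors, which is in particular co-meager, proving (b). Note that the whole argument takes place among finite linear combinations of the $v_k$, so the absence of a Schauder basis causes no difficulty, and it is insensitive to whether $\KK = \RR$ or $\KK = \CC$, since both the interpolation nodes $\lambda_k$ and the prescribed values $b_k/a_k$ lie in $\KK$.
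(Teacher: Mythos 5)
Your proposal is correct and follows essentially the same route as the paper: both reduce to the finite-dimensional $T$-invariant subspaces $\lspan\{e_1,\dots,e_N\}$, diagonalize the restricted operator using the distinctness of the $\lambda_k$, and conclude via polynomial interpolation at distinct nodes together with the transitivity criterion of Remark~\ref{Rem:cyclicity}. Your explicit inductive construction of the eigenvectors $v_k$ and the Lagrange interpolation step (with the small perturbation making all coefficients nonzero) are just hands-on versions of the paper's similarity $T_N = L D_N L^{-1}$ and its root-counting argument showing that vectors with all coordinates nonzero are cyclic for the diagonal model.
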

\begin{proof}
	For each $N \in \NN$ set $X_N := \lspan\{ e_k \ ; \ 1\leq k\leq N \}$, which is a $T$-invariant subspace isomorphic to $\KK^N$ by the upper-triangular condition, and consider the restriction $T_N := T\res_{X_N} : X_N \longrightarrow X_N$. It is then enough to check that the following statements hold:
	\begin{enumerate}[(a')]
		\item We have the equality $X_N = \lspan\left(\bigcup_{1\leq k\leq N} \Ker(T_N - \lambda_k I) \right)$ for all $N \in \NN$.
		
		\item The operator $T_N$ has a dense set of cyclic vectors in $X_N$ for all $N \in \NN$.
	\end{enumerate}
	In fact, if (a') holds then statement (a) follows since $\bigcup_{N \in \NN} X_N$ is dense in $X$. Moreover, and since $X_N \subset X_{N+1}$ for all $N \in \NN$, we know that given two non-empty open subsets $U,V \subset X$ we can find $N \in \NN$ such that $U_N:=U \cap X_N$ and $V_N:=V \cap X_N$ are non-empty open subsets of $X_N$. Hence, if statement (b') holds there exists a vector $x \in U_N \subset U$ and a polynomial $p$ such that $p(T)x = p(T_N)x \in V_N \subset V$, so that $p(T)(U) \cap V \neq \emptyset$ and (b) follows from Remark~\ref{Rem:cyclicity}.
	
	Let us now check (a') and (b'): for any fixed $N \in \NN$ we have that
	\[
	T_N =
	\begin{pmatrix}
		\lambda_1 & & & & \\
		0 & \lambda_2 & & (*) & \\
		0 & 0 & \lambda_3 & & \\
		\vdots & \vdots  & \vdots & \ddots & \\
		0 & 0 & 0 & 0 & \lambda_N \\
	\end{pmatrix}
	\]
	and since all the $\lambda_k$ are assumed to be different we deduce that $\sigma_p(T_N)=\{\lambda_k \ ; \ 1\leq k\leq N\}$ and the matrix $T_N$ is similar to the diagonal matrix $D_N = \textup{Diag}(\lambda_1,\lambda_2,...,\lambda_N)$, i.e.\ $T_N = L D_N L^{-1}$ for some invertible operator $L:\KK^N\longrightarrow \KK^N$. Starting with (a'), it is clear that $e_k \in \Ker(D_N-\lambda_k I)$ for each index $1\leq k\leq N$, so that
	\[
	\textstyle X_N = \lspan\left(\bigcup_{1\leq k\leq N} \Ker(D_N - \lambda_k I) \right).
	\]
	It is then immediate to check that $Le_k \in \Ker(T_N-\lambda_k I)$, and since $L$ is invertible we deduce that the set $\{ Le_k \ ; \ 1\leq k\leq N \}$ is an algebraic basis of $X_N$, which finally shows (a'). Regarding (b'), we claim that every vector $x \in X_N$ with $\ep{e_k^*}{x} \neq 0$ for all $1\leq k\leq N$ is cyclic for $D_N$: if such a vector $x$ was not cyclic there would be a non-zero polynomial $p$ of degree less or equal to $N-1$ fulfilling that $0 = p(D_N)x = \sum_{k=1}^N p(\lambda_k)\ep{e_k^*}{x} e_k$, which would imply that $p(\lambda_k)=0$ for every $1\leq k\leq N$, contradicting the maximum number of roots that $p$ can have. Thus, $D_N$ has a dense set of cyclic vectors in $X_N$ and (b') follows since the map $L$ has dense range and the $L$-image of every cyclic vector for the matrix $D_N$ is cyclic for the matrix $T_N$.
\end{proof}

\begin{remark}\label{Rem:RRec+cyclic}
	Suppose now that $X$ is a complex space and $\TT := \{ z \in \CC \ ; \ |z|=1 \}$. If $\sbf{\lambda}=(\lambda_k)_{k\in\NN}$ and $\sbf{\omega}=(\omega_k)_{k\in\NN}$ are sequences of complex numbers fulfilling that the operator $T_{\sbf{\lambda},\sbf{\omega}}:c_{00}\longrightarrow c_{00}$ extends continuously to $X$, it follows from the previous result that:
	\begin{enumerate}[--]
		\item {\em A sufficient condition for the operator $T_{\sbf{\lambda},\sbf{\omega}}$ to be reiteratively recurrent and cyclic is that the sequence of complex numbers $\sbf{\lambda}=(\lambda_k)_{k\in\NN}$ belongs to $\TT^{\NN}$ and $\lambda_k \neq \lambda_l$ for all $k\neq l \in \NN$.}
	\end{enumerate}
	Cyclicity follows directly from statement (b) of Lemma~\ref{Lem:eigenvectors+cyclicity}. For reiterative recurrence recall that $x \in X \setminus \{0\}$ is called a {\em unimodular eigenvector for $T$} provided that $Tx=\lambda x$ for some $\lambda \in \TT$, so that if
	\[
	\Ec(T) := \{ x \in X\setminus\{0\} \ ; \ Tx=\lambda x \text{ for some } \lambda \in \TT \},
	\]
	and if $\sbf{\lambda}=(\lambda_k)_{k\in\NN} \in \TT^{\NN}$, we then have that $\lspan(\Ec(T))$ is dense in $X$ by statement (a) of Lemma~\ref{Lem:eigenvectors+cyclicity}. Finally, it is well-known that $\lspan(\Ec(T)) \subset \RRec(T)$; see for instance \cite{BoGrLoPe2022} or \cite{GriLo2023}. In addition, if we choose each $\lambda_k$ to be a root of unity, then $\lspan\left(\bigcup_{k \in \NN} \Ker(T - \lambda_k I) \right)$ is even formed by periodic vectors for $T$; see \cite[Proposition~2.33]{GrPe2011_book}. This will be the case in our example.
\end{remark}

\subsection[Choosing lambda and omega]{Choosing $\sbf{\lambda}=(\lambda_k)_{k\in\NN}$ and $\sbf{\omega}=(\omega_k)_{k\in\NN}$}\label{Sec_3.2:lambda.omega}

We are finally ready to prove Theorem~\ref{The:RRec+cyclic.but.meager} and we start by the \textbf{complex} case as we did in Section~\ref{Sec_2:not.dense.lineable}: fix any complex separable infinite-dimensional Banach space $X$ and let $(e_k,e_k^*)_{k\in\NN} \subset X\times X^*$ be a biorthogonal sequence with the properties stated in Subsection~\ref{Sec_1.1:notation}. We are going to construct a reiteratively recurrent and cyclic operator $T_{\sbf{\lambda},\sbf{\omega}}$ for which the set $\RRec(T_{\sbf{\lambda},\sbf{\omega}})$ is meager. Recall that by the already mentioned result \cite[Theorem~2.1]{BoGrLoPe2022} such an operator cannot be hypercyclic, and in fact, we will get the non-hypercyclicity by considering some $\omega_k=0$.

We start by fixing $\sbf{\omega}$: for each $j \in \NN$ consider the one-dimensional rank operator
\[
\left( e_{2j-1} \otimes e_{2j}^* \right) : X \longrightarrow X \ , \quad x \longmapsto \ep{e_{2j}^*}{x} e_{2j-1} \ ,
\]
choose any summable sequence $v = (v_j)_{j\in\NN} \in \ell^1(\NN)$ with $|v_j|>0$ for every $j \in \NN$, and consider a sequence $\sbf{\omega}=(\omega_k)_{k\in\NN} \in \CC^{\NN}$ fulfilling that
\[
0 < |\omega_{2j-1}| \leq \frac{|v_j|}{\|e_{2j-1} \otimes e_{2j}^*\|} \qquad \text{ and } \qquad \omega_{2j} = 0
\]
for each $j\geq 1$. Thus the linear map $(\sum_{j \in \NN} \omega_{2j-1} \cdot ( e_{2j-1} \otimes e_{2j}^* ) ) : c_{00} \longrightarrow c_{00}$, which coincides with the backward shift $B_{\sbf{\omega}}$ on $c_{00}$, extends continuously to the whole space $X$ as a compact operator still denoted by $B_{\sbf{\omega}}$ since
\[
\left\| B_{\sbf{\omega}} \right\| = \left\| \sum_{j \in \NN} \omega_{2j-1} \cdot \left( e_{2j-1} \otimes e_{2j}^* \right) \right\| \leq  \sum_{j \in \NN} |\omega_{2j-1}| \cdot \left\| e_{2j-1} \otimes e_{2j}^* \right\| \leq \|v\|_1.
\]

We now fix $\sbf{\lambda}$: by using a trivial recursive process one can construct a strictly increasing sequence of positive integers $(m_j)_{j\in\NN} \in \NN^{\NN}$ fulfilling the properties
\begin{enumerate}[(a)]
	\item $m_j > j$ for every $j \in \NN$;
	
	\item $\displaystyle \lim_{j\to\infty} \frac{1}{m_j |\omega_{2j-1}|} = 0$.
\end{enumerate}
Let now
\[
\lambda_{2j-1} := \exp\left(2\pi i \cdot \frac{1}{m_j^2}\right) \qquad \text{ and } \qquad \lambda_{2j} := \exp\left(2\pi i \cdot \frac{2}{m_j^2}\right)
\]
for each $j \in \NN$. Note that the diagonal linear map
\[
D_{\sbf{\lambda}} : c_{00} \longrightarrow c_{00} \ , \quad x \longmapsto \sum_{k\in\NN} \lambda_k \ep{e_k^*}{x} e_k \ ,
\]
extends to the whole space $X$ (still denoted by $D_{\sbf{\lambda}}$) just as it happened for $R$ in Section~\ref{Sec_2:not.dense.lineable} since
\[
\|D_{\sbf{\lambda}}x\| \leq \|D_{\sbf{\lambda}}x-x\| + \|x\| \leq \sum_{k\in\NN} |\lambda_k-1| \cdot \left| \ep{e_k^*}{x} \right| + \|x\| \leq \left( K \sum_{k\in\NN} |\lambda_k-1| + 1 \right) \|x\|
\]
for every $x \in c_{00}$, and using again that $|\exp(i\theta)-1| \leq |\theta|$ for every $\theta \in \RR$ we have that
\[
\sum_{k\in\NN} |\lambda_k-1| \leq 6\pi \sum_{j\in\NN} \frac{1}{m_j^2} < \infty
\]
by condition (a) on the sequence $(m_j)_{j\in\NN}$.

We can finally consider the operator $T_{\sbf{\lambda},\sbf{\omega}} := D_{\sbf{\lambda}} + B_{\omega} : X \longrightarrow X$, which is continuous and also upper-triangular with respect to $(e_k,e_k^*)_{k\in\NN}$. Since $(m_j)_{j\in\NN}$ is strictly increasing and $m_1>1$ we clearly have that $\lambda_k \neq \lambda_l$ for every $k\neq l \in \NN$ so that Lemma~\ref{Lem:eigenvectors+cyclicity}, and in particular Remark~\ref{Rem:RRec+cyclic}, ensures that the operator $T_{\sbf{\lambda},\sbf{\omega}}$ is reiteratively recurrent and cyclic for these $\sbf{\lambda}=(\lambda_k)_{k\in\NN}$ and $\sbf{\omega}=(\omega_k)_{k\in\NN}$. The form of our operator is the following
\[
T_{\sbf{\lambda},\sbf{\omega}} = 
\begin{pmatrix}
	A_1 & & & & & \\
	& A_2 & & & (0) & \\
	& & A_3 & & & \\
	& & & \ddots & & \\
	& (0) & & & A_j & \\
	& & & & & \ddots \\
\end{pmatrix}
\]
where for each $j\geq 1$ we are denoting by $A_j$ the $2\times 2$ matrix
\begin{equation}\label{eq:A_j}
	A_j = 
	\begin{pmatrix}
		\exp\left(2\pi i \cdot \frac{1}{m_j^2}\right) & \omega_{2j-1} \\
	0 & \exp\left(2\pi i \cdot \frac{2}{m_j^2}\right) \\
	\end{pmatrix}
\end{equation}
so that $T_{\sbf{\lambda},\sbf{\omega}}$ can be expressed as a direct sum of 2-dimensional cyclic operators $T_{\sbf{\lambda},\sbf{\omega}} = \bigoplus_{j\geq 1} A_j$. Our objective is now proving that the set $\RRec(T_{\sbf{\lambda},\sbf{\omega}})$ is meager (see Proposition~\ref{Pro:RRec.meager} below), and we do it by using the dynamical properties of the matrices $A_j$ (see Lemma~\ref{Lem:coordinate(1,2)} below).

Indeed, given three complex values $\mu_1, \mu_2 , \omega \in \CC$ with $\mu_1\neq\mu_2$ and $\omega \neq 0$, then for the $2\times 2$ matrix
\[
A = 
\begin{pmatrix}
	\mu_1 & \omega \\
	0 & \mu_2 \\
\end{pmatrix}
\]
it is easily checked, inductively, that the $n$-th power of $A$ for each $n \in \NN$ has the form
\[
A^n = 
\begin{pmatrix}
	\mu_1^n & \frac{\mu_1^n-\mu_2^n}{\mu_1-\mu_2} \cdot \omega \\
	0 & \mu_2^n \\
\end{pmatrix}.
\]
This formula allows us to proof the following technical fact, which is the key to complete our objective:

\begin{lemma}\label{Lem:coordinate(1,2)}
	For each positive integer $j\geq 1$ and each $n \in \NN$ of the form $n = \ell \cdot m_j^2 + k$ with $\ell \geq 0$ and $m_j \leq k \leq m_j^2-m_j$, we have that the coordinate $(1,2)$ of the matrix $A_j^n=(A_j)^n$ has modulus
	\[
	\left| A_j^n(1,2) \right| = \frac{| \lambda_{2j-1}^n - \lambda_{2j}^n |}{| \lambda_{2j-1} - \lambda_{2j}|} \cdot |\omega_{2j-1}| \geq \frac{2m_j|\omega_{2j-1}|}{\pi}.
	\]
\end{lemma}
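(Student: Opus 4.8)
The plan is to reduce everything to a single ratio of sines and then exploit the specific range of $k$. I would begin from the explicit formula for $A^n$ recorded just above the statement, which immediately gives that the $(1,2)$ coordinate of $A_j^n$ equals $\frac{\lambda_{2j-1}^n - \lambda_{2j}^n}{\lambda_{2j-1}-\lambda_{2j}} \cdot \omega_{2j-1}$, so that the claimed equality $|A_j^n(1,2)| = \frac{|\lambda_{2j-1}^n - \lambda_{2j}^n|}{|\lambda_{2j-1}-\lambda_{2j}|}\,|\omega_{2j-1}|$ holds automatically. The whole task is therefore to prove the lower bound $\frac{|\lambda_{2j-1}^n-\lambda_{2j}^n|}{|\lambda_{2j-1}-\lambda_{2j}|} \geq \frac{2m_j}{\pi}$.

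The key first step is to rewrite both moduli using the elementary identity $|e^{i\alpha}-e^{i\beta}| = 2\,|\sin(\frac{\alpha-\beta}{2})|$. Since $\lambda_{2j-1}=\exp(2\pi i/m_j^2)$ and $\lambda_{2j}=\exp(4\pi i/m_j^2)$, the denominator becomes $2\,|\sin(\pi/m_j^2)|$, while the numerator becomes $2\,|\sin(\pi n/m_j^2)|$ (the difference of the two phases being $-2\pi n/m_j^2$). Hence the ratio collapses to $\frac{|\sin(\pi n/m_j^2)|}{|\sin(\pi/m_j^2)|}$, and the problem is now purely trigonometric.

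Next I would use the hypothesis on the shape of $n$. Writing $n=\ell\cdot m_j^2 + k$ gives $\frac{\pi n}{m_j^2} = \pi\ell + \frac{\pi k}{m_j^2}$, and since $\sin$ is anti-periodic with period $\pi$ we get $|\sin(\pi n/m_j^2)| = |\sin(\pi k/m_j^2)|$. The constraint $m_j \leq k \leq m_j^2 - m_j$ forces the angle $\frac{\pi k}{m_j^2}$ to lie in the interval $[\frac{\pi}{m_j}, \pi-\frac{\pi}{m_j}]$, on which $\sin$ is positive and, by symmetry about $\pi/2$ together with the monotonicity of $\sin$ on $[0,\pi]$, attains its minimum at the endpoints; this yields $|\sin(\pi k/m_j^2)| \geq \sin(\pi/m_j)$. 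For the denominator I use $|\sin(\pi/m_j^2)| \leq \pi/m_j^2$, and for the numerator estimate I apply Jordan's inequality $\sin\theta \geq \frac{2}{\pi}\theta$ valid on $[0,\pi/2]$ (applicable because $m_j > j \geq 1$ forces $m_j \geq 2$, hence $\pi/m_j \leq \pi/2$), giving $\sin(\pi/m_j) \geq \frac{2}{m_j}$. Combining the three estimates produces $\frac{|\sin(\pi n/m_j^2)|}{|\sin(\pi/m_j^2)|} \geq \frac{m_j^2}{\pi}\cdot\frac{2}{m_j} = \frac{2m_j}{\pi}$, which is exactly what is needed.

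I expect the only delicate point to be the lower bound $|\sin(\pi k/m_j^2)| \geq \sin(\pi/m_j)$: this is precisely where the seemingly arbitrary range $m_j \leq k \leq m_j^2 - m_j$ is used, its whole purpose being to keep the angle $\frac{\pi k}{m_j^2}$ uniformly bounded away from the zeros of sine at $0$ and $\pi$. Once this geometric observation is in place, the remaining inequalities are the standard elementary sine bounds and the argument concludes immediately.
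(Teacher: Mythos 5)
Your proposal is correct and follows essentially the same route as the paper: reduce the ratio to $\bigl|\sin(\pi n/m_j^2)\bigr|/\bigl|\sin(\pi/m_j^2)\bigr|$, bound the denominator by $\pi/m_j^2$, and use the range of $k$ together with Jordan's inequality $\sin\theta\geq\frac{2}{\pi}\theta$ to bound the numerator below by $2/m_j$. The only cosmetic difference is that you invoke the symmetry of $\sin$ about $\pi/2$ to reduce to the endpoint value $\sin(\pi/m_j)$ before applying Jordan's inequality, whereas the paper applies it directly to $\sin(\pi k/m_j^2)$ for $k\leq m_j^2/2$ and then handles the upper half of the range by the same symmetry.
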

\begin{proof}
	Given $j \geq 2$ and any $n \in \NN$ we have that
	\[
	\frac{\left| A_j^n(1,2) \right|}{|\omega_{2j-1}|} = \frac{\left| \lambda_{2j-1}^n - \lambda_{2j}^n \right|}{\left| \lambda_{2j-1} - \lambda_{2j} \right|} = \frac{\left| \exp\left(2\pi n i \cdot \frac{1}{m_j^2}\right) - 1 \right|}{\left| \exp\left(2\pi i \cdot \frac{1}{m_j^2}\right) - 1 \right|} = \frac{\left| \sin\left(\frac{\pi n}{m_j^2}\right) \right|}{\left| \sin\left(\frac{\pi}{m_j^2}\right) \right|} \geq \frac{m_j^2}{\pi} \cdot \left| \sin\left(\frac{\pi n}{m_j^2}\right) \right|,
	\]
	where we have used that $|\sin(\theta)| \leq |\theta|$ for every $\theta \in \RR$. If now we let $n = \ell \cdot m_j^2 + k$ with $\ell \geq 0$ and $m_j \leq k \leq m_j^2/2$, then
	\[
	\left| \sin\left(\frac{\pi n}{m_j^2}\right) \right| = \left| \sin\left(\frac{\pi k}{m_j^2}\right) \right| \geq \frac{2k}{m_j^2} \geq \frac{2}{m_j},
	\]
	by using that $\sin(\theta) \geq \frac{2}{\pi} \theta$ for each $\theta \in [0,\frac{\pi}{2}]$, so that
	\begin{equation}\label{eq:j<=k<=j^2-j}
		\left| A_j^n(1,2) \right| \geq \frac{m_j^2}{\pi} \cdot \left| \sin\left(\frac{\pi n}{j^2}\right) \right| \cdot |\omega_{2j-1}| \geq \frac{2m_j|\omega_{2j-1}|}{\pi},
	\end{equation}
	for each $n = \ell \cdot m_j^2 + k$ with $\ell \geq 0$ and $m_j \leq k \leq m_j^2/2$. Since the sinus function is symmetric with respect to $\frac{\pi}{2}$ we deduce that \eqref{eq:j<=k<=j^2-j} also holds whenever $m_j^2/2 \leq k \leq m_j^2-m_j$.
\end{proof}

\begin{proposition}\label{Pro:RRec.meager}
	The set $\RRec(T_{\sbf{\lambda},\sbf{\omega}})$ is meager.
\end{proposition}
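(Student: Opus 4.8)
The plan is to prove that every reiteratively recurrent vector must have abnormally small even coordinates beyond some index, and that the set of vectors with this property is meager. Throughout write $a_j := \ep{e_{2j-1}^*}{x}$ and $b_j := \ep{e_{2j}^*}{x}$. From the block decomposition $T_{\sbf{\lambda},\sbf{\omega}} = \bigoplus_{j\geq 1} A_j$ and the formula for $A_j^n$, one has for every $x \in X$ and $n \geq 1$ the identity
\[
\ep{e_{2j-1}^*}{T_{\sbf{\lambda},\sbf{\omega}}^n x} = \lambda_{2j-1}^n\, a_j + A_j^n(1,2)\, b_j,
\]
which holds first on $c_{00}$ and then on all of $X$ by continuity of $T_{\sbf{\lambda},\sbf{\omega}}^n$ and $e_{2j-1}^*$ together with the density of $c_{00}$. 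For each $j$ let $G_j := \{ n \geq 1 \ ; \ n = \ell\, m_j^2 + k,\ \ell \geq 0,\ m_j \leq k \leq m_j^2 - m_j \}$ be the set appearing in Lemma~\ref{Lem:coordinate(1,2)}; it is periodic of period $m_j^2$ and omits exactly $2m_j - 1$ residues per period, so its complement satisfies $\Bdsup(\NN \setminus G_j) = \tfrac{2m_j-1}{m_j^2} \leq \tfrac{2}{m_j}$.

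First I would extract the dynamical obstruction. Fix $\eps > 0$ and suppose some $n \in G_j$ lies in the return set $\Nc_T(x,U_\eps)$, where $U_\eps := \{ y \in X \ ; \ \|y - x\| < \eps \}$. Then $\|T_{\sbf{\lambda},\sbf{\omega}}^n x - x\| < \eps$ forces $|\ep{e_{2j-1}^*}{T_{\sbf{\lambda},\sbf{\omega}}^n x - x}| < K\eps$ by \eqref{eq:xk<=Kx}; combining this with the displayed identity and $|\lambda_{2j-1}^n - 1| \leq 2$ gives $|A_j^n(1,2)|\,|b_j| < K\eps + 2|a_j|$. Since $n \in G_j$, Lemma~\ref{Lem:coordinate(1,2)} yields $|A_j^n(1,2)| \geq \tfrac{2m_j|\omega_{2j-1}|}{\pi}$, whence
\[
|b_j| < \frac{\pi\left(K\eps + 2|a_j|\right)}{2\, m_j |\omega_{2j-1}|} =: \tau_j(x).
\]
Contrapositively, if $|b_j| \geq \tau_j(x)$ then $\Nc_T(x,U_\eps) \cap G_j = \emptyset$, so $\Nc_T(x,U_\eps) \subseteq \NN \setminus G_j$ and hence, by monotonicity of the upper Banach density, $\Bdsup(\Nc_T(x,U_\eps)) \leq \tfrac{2}{m_j}$.

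Next I would convert this into an inclusion of sets. If $x \in \RRec(T_{\sbf{\lambda},\sbf{\omega}})$ then $\Bdsup(\Nc_T(x,U_\eps)) > 0$; since $(m_j)_{j\in\NN}$ is strictly increasing, the previous bound shows that $|b_j| \geq \tau_j(x)$ can occur for only finitely many $j$ (if it occurred for infinitely many, the fixed number $\Bdsup(\Nc_T(x,U_\eps))$ would be bounded by $\tfrac{2}{m_j} \to 0$, forcing it to vanish). It therefore suffices to work with the single radius $\eps = 1$, and we obtain
\[
\RRec(T_{\sbf{\lambda},\sbf{\omega}}) \subseteq A := \bigcup_{J \in \NN} A_J, \qquad A_J := \left\{ x \in X \ ; \ |b_j| \leq \tau_j(x) \text{ for all } j \geq J \right\},
\]
where now $\tau_j(x) = \tfrac{\pi(K + 2|a_j|)}{2 m_j |\omega_{2j-1}|}$. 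Finally I would show that each $A_J$ is nowhere dense, which makes $A$, and hence $\RRec(T_{\sbf{\lambda},\sbf{\omega}})$, meager. The crucial feature is condition (b) on $(m_j)_{j\in\NN}$, namely $m_j|\omega_{2j-1}| \to \infty$, which together with $|a_j| \leq K\|x\|$ forces $\tau_j(x) \to 0$ uniformly on bounded sets. Given any ball $B(x_0,\delta)$, I would fix a large index $j \geq J$ and perturb to $x_0' := x_0 + \tfrac{\delta}{2}\, u\, e_{2j}$, choosing the scalar $u$ with $|u| = 1$ so that $|\ep{e_{2j}^*}{x_0'}| \geq \tfrac{\delta}{2}$; biorthogonality guarantees $\ep{e_{2j-1}^*}{x_0'} = \ep{e_{2j-1}^*}{x_0}$ and $\|x_0' - x_0\| = \tfrac{\delta}{2}$. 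Setting $\rho := \tfrac{\delta}{4K}$ (so that $B(x_0',\rho) \subseteq B(x_0,\delta)$, using $K \geq 1$), every $y \in B(x_0',\rho)$ satisfies $|\ep{e_{2j}^*}{y}| \geq \tfrac{\delta}{2} - K\rho = \tfrac{\delta}{4}$ by \eqref{eq:xk<=Kx}, while $\tau_j(y) < \tfrac{\delta}{4}$ once $j$ is large enough; thus $|\ep{e_{2j}^*}{y}| > \tau_j(y)$ and $y \notin A_J$. This produces a subball of $B(x_0,\delta)$ disjoint from $A_J$, proving that $A_J$ is nowhere dense.

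I expect the main obstacle to be the passage in the third paragraph: one must realize that the pointwise statement ``$|b_j| \geq \tau_j(x)$ forces the return set to miss $G_j$'' only becomes useful when applied along infinitely many scales $j$ at once, since a single $G_j$ has density close to $1$ but its complement still has positive density $\tfrac{2}{m_j}$; it is letting $m_j \to \infty$ that drives the upper Banach density to $0$ and contradicts reiterative recurrence. The accompanying observation that it is enough to defeat recurrence at the single radius $\eps = 1$ keeps the target set $A$ independent of $\eps$, after which the perturbation argument of the last paragraph is routine bookkeeping with \eqref{eq:xk<=Kx} and Lemma~\ref{Lem:coordinate(1,2)}.
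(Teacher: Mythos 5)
Your proof is correct and follows essentially the same route as the paper's: the coordinate identity for $\ep{e_{2j-1}^*}{T_{\sbf{\lambda},\sbf{\omega}}^n x}$, the lower bound of Lemma~\ref{Lem:coordinate(1,2)} on the arithmetic-progression set $G_j$, the count of at most $2m_j$ admissible return times per window of length $m_j^2$, and condition (b) driving the thresholds to zero are exactly the ingredients of Proposition~\ref{Pro:RRec.meager}. The only difference is packaging: the paper exhibits a single dense $G_\delta$-set of non-reiteratively-recurrent vectors using the fixed threshold $\tfrac{1}{m_j|\omega_{2j-1}|}$ (absorbing the $|\ep{e_{2j-1}^*}{x}|$ term via $\ep{e_k^*}{x}\to 0$ and the choice $\eps=\tfrac{2}{3\pi}$), whereas you cover $\RRec(T_{\sbf{\lambda},\sbf{\omega}})$ by the nowhere dense sets $A_J$ with an $x$-dependent threshold $\tau_j(x)$ --- an equivalent dual formulation of the same argument.
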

\begin{proof}
	It is enough to show that there exists a co-meager set $G \subset X$ such that $G \cap \RRec(T_{\sbf{\lambda},\sbf{\omega}}) = \emptyset$. The proof is an adaptation of the argument used in \cite[Example~2.4]{BoGrLoPe2022}. Let
	\[
	G := \left\{ x \in X \ ; \ |\ep{e_{2j}^*}{x}|>\tfrac{1}{m_j|\omega_{2j-1}|} \text{ for infinitely many } j \in \NN \right\}.
	\]
	By condition (b) on the sequence $(m_j)_{j\in\NN}$ and using that $c_{00}$ is dense in $X$, this set $G$ can be written as the intersection of countably many dense open subsets
	\[
	G = \bigcap_{N\in \NN} \bigcup_{j \geq N} \left\{ x \in X \ ; \ |\ep{e_{2j}^*}{x}|>\tfrac{1}{m_j|\omega_{2j-1}|} \right\},
	\]
	which shows that $G$ is a dense $G_{\delta}$-set, and hence co-meager.
	
	Fix a vector $x \in G$, let $\eps=\frac{2}{3\pi}$ and consider the neighbourhood $U = \{ y \in X \ ; \ \|y-x\| < \frac{\eps}{K} \}$ of $x$. Note that since $c_{00}$ is dense in $X$ there is some $k_0 \in \NN$ such that $|\ep{e_k^*}{x}|<\eps$ for every $k\geq k_0$. Thus, by \eqref{eq:xk<=Kx}, we have that if $y \in U$ then $|\ep{e_k^*}{y}|<2\eps$ for every $k\geq k_0$. Moreover, since $x \in G$, there exists an infinite set $J \subset \NN$ such that
	\[
	2j-1 \geq k_0 \quad \text{ and } \quad |\ep{e_{2j}^*}{x}|>\frac{1}{m_j|\omega_{2j-1}|} \quad \text{ for each } j \in J.
	\]
	Now, for each $j \geq 1$ and $n \in \NN$, the density of $c_{00}$ in $X$ shows that
	\[
	\ep{e_{2j-1}^*}{T_{\sbf{\lambda},\sbf{\omega}}^n x} = A_j^n(1,2) \cdot \ep{e_{2j}^*}{x} + \lambda_{2j-1}^n \cdot \ep{e_{2j-1}^*}{x},
	\]
	so that if $j \in J$ and $n = \ell \cdot m_j^2 + k$ for some $\ell \geq 0$ and $m_j \leq k \leq m_j^2-m_j$ we have that
	\[
	\left| \ep{e_{2j-1}^*}{ T_{\sbf{\lambda},\sbf{\omega}}^n x } \right| \geq \left|A_j^n(1,2)\right| \cdot \left|\ep{e_{2j}^*}{x}\right| - \left| \ep{e_{2j-1}^*}{x} \right| > \frac{2}{\pi} - \eps = 2\eps
	\]
	by Lemma~\ref{Lem:coordinate(1,2)}. Then $T_{\sbf{\lambda},\sbf{\omega}}^nx \notin U$ and we have that $\# (\Nc_{T_{\sbf{\lambda},\sbf{\omega}}}(x,U) \cap [m+1,m+m_j^2]) \leq 2m_j$ for every integer $m\geq 0$. We can now compute the limit
	\[
	\Bdsup(\Nc_{T_{\sbf{\lambda},\sbf{\omega}}}(x,U)) = \lim_{J \ni j\to\infty} \left( \sup_{m\geq 0}\frac{\# (\Nc_{T_{\sbf{\lambda},\sbf{\omega}}}(x,U) \cap [m+1,m+m_j^2])}{m_j^2} \right) \leq \lim_{J \ni j \to \infty} \tfrac{2m_j}{m_j^2} = 0,
	\]
	so $x \notin \RRec(T_{\sbf{\lambda},\sbf{\omega}})$. This shows that the set of reiteratively recurrent vectors for $T_{\sbf{\lambda},\sbf{\omega}}$ is meager.
\end{proof}

The \textbf{complex} version of Theorem~\ref{The:RRec+cyclic.but.meager} is now proved, but the construction can be easily adapted to the \textbf{real} case by using a conjugacy argument that we include in the following lines: if $X$ is any real separable infinite-dimensional Banach space, and we again let $(e_k,e_k^*)_{k \in \NN}$ be a biorthogonal sequence with the properties stated in Subsection~\ref{Sec_1.1:notation}, we can consider the linear map
\[
T = 
\begin{pmatrix}
	B_1 & & & & & \\
	& B_2 & & & (0) & \\
	& & B_3 & & & \\
	& & & \ddots & & \\
	& (0) & & & B_j & \\
	& & & & & \ddots \\
\end{pmatrix} : c_{00} \longrightarrow c_{00}
\]
where for each $j\geq 1$ we are denoting by $B_j$ the $4\times 4$ matrix
\begin{equation}\label{eq:B_j}
	B_j = 
	\begin{pmatrix}
		\cos\left(2\pi \cdot \frac{1}{m_j^2}\right) & -\sin\left(2\pi \cdot \frac{1}{m_j^2}\right) & \text{Re}(\omega_{2j-1}) & -\text{Im}(\omega_{2j-1}) \\
		\sin\left(2\pi \cdot \frac{1}{m_j^2}\right) & \cos\left(2\pi \cdot \frac{1}{m_j^2}\right) & \text{Im}(\omega_{2j-1}) & \text{Re}(\omega_{2j-1}) \\
		0 & 0 & \cos\left(2\pi \cdot \frac{2}{m_j^2}\right) & -\sin\left(2\pi \cdot \frac{2}{m_j^2}\right) \\
		0 & 0 & \sin\left(2\pi \cdot \frac{2}{m_j^2}\right) & \cos\left(2\pi \cdot \frac{2}{m_j^2}\right) \\
	\end{pmatrix}.
\end{equation}
If the sequence of non-zero complex numbers $(\omega_{2j-1})_{j\in\NN}$ decreases fast enough, and if $(m_j)_{j\in\NN}$ satisfies condition (a) from Subsection~\ref{Sec_3.2:lambda.omega}, then the map $T$ extends continuously to an operator acting on $X$, still denoted by $T$. We can then show that $T$ is reiteratively recurrent, that if $m_1>2$ then $T$ is cyclic, and that if also condition (b) from Subsection~\ref{Sec_3.2:lambda.omega} is satisfied then $\RRec(T)$ is a meager set.

Indeed, if following Lemma~\ref{Lem:eigenvectors+cyclicity} we set $X_{N}:=\lspan\{ e_k \ ; \ 1\leq k \leq N \}$ for each $N \in \NN$, then for every positive integer $j \in \NN$ we can define the homeomorphism
\[
\phi_j : X_{4j} \longrightarrow \til{X_{2j}} \ , \quad \left(\ep{e_k^*}{x}\right)_{k=1}^{4j} \longmapsto \left( \ep{e_{2k-1}^*}{x} + i \ep{e_{2k}^*}{x} \right)_{k=1}^{2j} \ ,
\]
where $\til{X_{2j}} := X_{2j} + i X_{2j}$ denotes the standard {\em complexification} of the real finite-dimensional Banach subspace $X_{2j} = \lspan\{ e_k \ ; \ 1\leq k \leq 2j \} \subset X$ (see \cite{MoMuPeSe2022}), and it is trivial to check that
\[
\phi_j \circ T_{4j} = T_{\sbf{\lambda},\sbf{\omega},2j} \circ \phi_j
\]
for every $j \in \NN$, where
\[
T_{4j} = \begin{pmatrix}
	B_1 & & & &\\
	& B_2 & & (0) &\\
	& & B_3 & & \\
	& (0) & & \ddots & \\
	& & & & B_j \\
\end{pmatrix} : X_{4j} \longrightarrow X_{4j}, \ T_{\sbf{\lambda},\sbf{\omega},2j} =
\begin{pmatrix}
	A_1 & & & &\\
	& A_2 & & (0) &\\
	& & A_3 & & \\
	& (0) & & \ddots & \\
	& & & & A_j \\
\end{pmatrix} : \til{X_{2j}} \longrightarrow \til{X_{2j}},
\]
and where each $A_j$ is the matrix described in \eqref{eq:A_j}. The previous relations and equalities show that each homeomorphism $\phi_j$ is a {\em conjugacy of dynamical systems} (see \cite[Definition~1.5]{GrPe2011_book}) between the finite-dimensional systems $T_{4j}$ and $T_{\sbf{\lambda},\sbf{\omega},2j}$ for each $j \in \NN$. We claim that:
\begin{enumerate}[--]
	\item \textbf{The equality $X_{4j} = \RRec(T_{4j})$ holds for every $j \in \NN$}: by Lemma~\ref{Lem:eigenvectors+cyclicity} we know that
	\[
	\til{X_{2j}} = \RRec(T_{\sbf{\lambda},\sbf{\omega},2j}),
	\]
	for every $j \in \NN$, and a standard conjugacy argument completes the statement (see \cite[Lemma~4.14]{GriLoPe2022}).
	
	\item \textbf{If $m_1>2$ then every $T_{4j}$ has a dense set of cyclic vectors}: using again that $T_{4j}$ is dynamically conjugated to $T_{\sbf{\lambda},\sbf{\omega},2j}$, and hence it is also dynamically conjugated to the diagonal matrix
	\[
	D_{2j} = \text{Diag}(\lambda_1,...,\lambda_{2j}) : \til{X_{2j}} \longrightarrow \til{X_{2j}},
	\]
	it is enough to show that $D_{2j}$ admits a dense set of vectors that are cyclic with respect to the polynomials with real coefficients. We know that the subset of vectors for which every component is a non-zero complex value is dense in $\til{X_{2j}}$, and we can show that these vectors are cyclic with respect to the real polynomials. Indeed, suppose that given such a vector $x$ there was a non-zero polynomial $p$ with real coefficients and of degree less or equal to $4j-1$ such that $p(D_{2j})x = 0$. We would then have that $p(\lambda_k)=0$ for every $1\leq k\leq 2j$. However, since $p$ has real coefficients, then also the conjugate value of every $\lambda_k$ is a root of $p$. Therefore, since all the $\lambda_k$ and $\overline{\lambda_k}$ are different because we assumed $m_1>2$, this contradicts the maximum number of roots that $p$ can have.
\end{enumerate}
Reasoning as in Lemma~\ref{Lem:eigenvectors+cyclicity} we obtain that the whole real-linear operator $T:X\longrightarrow X$ is reiteratively recurrent and cyclic. To show that $\RRec(T)$ is a meager set one can consider in $X$ the dense $G_{\delta}$-set
\[
G := \left\{ x \in X \ ; \ |\ep{e_{4j}^*}{x}|>\tfrac{1}{m_j|\omega_{2j-1}|} \text{ for infinitely many } j \in \NN \right\}
\]
and then check that $G \cap \RRec(T) = \emptyset$ by using Lemma~\ref{Lem:coordinate(1,2)} as in Proposition~\ref{Pro:RRec.meager}. Note that this reasoning follows from the key fact that the matrices $A_j$ as described in \eqref{eq:A_j} are finite-dimensional systems dynamically conjugated to the matrices $B_j$ as described in \eqref{eq:B_j}.

We conclude that every (real or complex) separable infinite-dimensional Banach space supports a reiteratively recurrent and cyclic operator whose set of reiteratively recurrent vectors is meager, and we have finally completely solved Question~\ref{Q:RRec.co-meager}.

\section{Final comments and open problems}

In Section~\ref{Sec_2:not.dense.lineable} we have exhibited the existence of recurrent operators whose set of recurrent vectors is not dense lineable in every separable infinite-dimensional Banach space. As we comment in Remark~\ref{Rem:AP.not.dense.lineable} the same examples are valid for the slightly stronger notion of $\AP$-recurrence. However, the general dense lineability property for other notions such as reiterative, $\Uc$-frequent, frequent or uniform recurrence, is still unknown. Let us introduce these properties and comment on the respective problems:

\begin{definition}[\textbf{\cite{BoGrLoPe2022}}]
	Let $T:X\longrightarrow X$ be a continuous linear operator acting on a Banach space $X$. A vector $x\in X$ is called
	\begin{enumerate}[--]
		\item {\em uniformly recurrent for $T$} if for any neighbourhood $U$ of $x$ the return set
		\[
		\Nc_T(x,U)= \{ n\geq 1 \ ; \ T^nx\in U \}
		\]
		has bounded gaps, that is, there exists $m_U \in \NN$ such that $\Nc_T(x,U) \cap [n,n+m_U] \neq \emptyset$ for all $n \in \NN$. The set of such vectors is denoted by $\URec(T)$ and $T$ is called a {\em uniformly recurrent operator} if such a set is dense in $X$.
		
		\item {\em frequently recurrent for $T$} if for any neighbourhood $U$ of $x$ the return set $\Nc_T(x,U)$, defined as before, has positive lower density, that is,
		\[
		\dinf(\Nc_T(x,U)) = \liminf_{N\to\infty}\frac{\# (\Nc_T(x,U) \cap [1,N])}{N} > 0.
		\]
		The set of such vectors is denoted by $\FRec(T)$ and $T$ is called a {\em frequently recurrent operator} if such a set is dense in $X$.
		
		\item {\em $\Uc$-frequently recurrent for $T$} if for any neighbourhood $U$ of $x$ the return set $\Nc_T(x,U)$, defined as before, has positive upper density, that is,
		\[
		\dsup(\Nc_T(x,U)) = \limsup_{N\to\infty}\frac{\# (\Nc_T(x,U) \cap [1,N])}{N} > 0.
		\]
		The set of such vectors is denoted by $\UFRec(T)$ and $T$ is called a {\em $\Uc$-frequently recurrent operator} if such a set is dense in $X$.
	\end{enumerate}
\end{definition}

As we were mentioning before, the following are open problems:

\begin{problems}
	Let $T:X\longrightarrow X$ be a continuous linear operator acting on a Banach space $X$:
	\begin{enumerate}[(A)]
		\item If $T$ is reiteratively recurrent, is $\RRec(T)$ dense lineable?
		
		\item If $T$ is $\Uc$-frequently recurrent, is $\UFRec(T)$ dense lineable?
		
		\item If $T$ is frequently recurrent, is $\FRec(T)$ dense lineable?
		
		\item If $T$ is uniformly recurrent, is $\URec(T)$ dense lineable?
	\end{enumerate}
\end{problems}

In \cite{BoGrLoPe2022} it is shown that $\URec(T) \subset \FRec(T) \subset \UFRec(T) \subset \RRec(T) \subset \AP\Rec(T) \subset \Rec(T)$ for every operator $T$, so one may wonder if the examples exhibited in Section~\ref{Sec_2:not.dense.lineable} can solve negatively these questions. However, if $\RRec(T)$ is a dense set then $T$ is quasi-rigid and hence $\Rec(T)$ is dense lineable by \cite[Proposition~6.2]{GriLoPe2022}, so that a similar construction/argument to that used here does not apply.

About the strongest recurrence notion between those introduced, namely {\em uniform recurrence}, we know that when the underlying space $X$ is Hilbert then the set $\URec(T)$ is dense lineable as soon as $T$ is uniformly recurrent. This follows from the inclusion $\lspan(\Ec(T)) \subset \URec(T)$, which holds for every operator $T$, together with the fact that
\[
\cl{\lspan(\Ec(T))}=\cl{\URec(T)} \quad \text{ whenever $X$ is a Hilbert space; see \cite[Theorem~1.9]{GriLo2023}}.
\]
However, it is not known if the equality $\cl{\lspan(\Ec(T))}=\cl{\URec(T)}$ is true for an arbitrary operator $T$ acting on general Banach space $X$, so for the moment we can not conclude if $\URec(T)$ is always dense lineable outside the Hilbertian setting.

\section*{Funding}

The first author was supported by the Spanish Ministerio de Ciencia, Innovación y Universidades, grant FPU2019/04094; by MCIN/AEI/10.13039/501100011033, Projects PID2019-105011GB-I00 and PID2022-139449NB-I00; and by the ``Fundaci\'o Ferran Sunyer i Balaguer''. The second author is a Research Associate of the Fonds de la Recherche Scientifique - FNRS.

{

}

{\small
$\ $\\

\textsc{Antoni L\'opez-Mart\'inez}: Universitat Polit\`ecnica de Val\`encia, Institut Universitari de Matem\`atica Pura i Aplicada, Edifici 8E, 4a planta, 46022 Val\`encia, Spain. e-mail: alopezmartinez@mat.upv.es\\

\textsc{Quentin Menet}: Universit\'e de Mons, Service de Probabilit\'es et Statistique, D\'epartement de Math\'ematique, Place du Parc 20, 7000 Mons, Belgium. e-mail: quentin.menet@umons.ac.be
}

\end{document}